\newcommand{\CN}{\mathcal{N}}
\newtheorem{thm}{Theorem}[section]
\newtheorem{lm}{Lemma}[section]
\newtheorem{prop}{Proposition}[section]
\newtheorem{cor}[thm]{Corollary}
\theoremstyle{definition}
\theoremstyle{remark}
\newtheorem{rem}{Remark}[section]
\numberwithin{equation}{section}
\DeclareMathOperator{\sgn}{sgn}
\newcommand{\eps}{\varepsilon}
\newcommand{\M}{\mathcal{M}}
\newcommand{\E}{\mathbb{E}}
\newcommand{\by}{\mathbf{y}}
\newcommand{\Lom}{\mathcal{L}}
\newcommand{\N}{{\mathbb{Z}}_+}
\newcommand{\Q}{{\mathbb{Q}}}
\newcommand{\BX}{\bar{X}}
\newcommand{\BS}{\bar{S}}
\newcommand{\BP}{\bar{P}}
\newcommand{\BY}{\mathbf{Y}}
\newcommand{\PP}{\mathbb{P}}
\newcommand{\R}{\mathbb{R}}
\numberwithin{equation}{section}
\newcommand{\1}{\boldsymbol{1}}
\newcommand{\wdt}{\widetilde}
\newcommand{\op}{{\mathcal L}}
\newcommand{\bed}{\begin{equation}}
\newcommand{\eed}{\end{equation}}
\newcommand{\bea}{\bed\begin{array}{rl}}
\newcommand{\eea}{\end{array}\eed}
\newcommand{\barray}{\begin{array}{ll}}
\newcommand{\earray}{\end{array}}
\newcommand{\diag}{{\rm diag}}
\newcommand{\dist}{{\rm dist}}
\def\disp{\displaystyle}
\def\bar{\overline}
\def\hat{\widehat}
\def\a.s{\text{\;a.s.\;}}
\def\bmu{\boldsymbol{\nu}}
\def\beq{\begin{equation}}
	\def\eeq{\end{equation}}
\def\ben{\begin{enumerate}}
	\def\een{\end{enumerate}}
\def\beqar{\begin{eqnarray}}
	\def\eeqar{\end{eqnarray}}
\def\beqarr{\begin{eqnarray*}}
	\def\eeqarr{\end{eqnarray*}}
\begin{document}
\title{Dynamics of stochastic microorganism flocculation models}

\author[A. Hening]{Alexandru Hening }
\address{Department of Mathematics\\
Texas A\&M University\\
Mailstop 3368\\
College Station, TX 77843-3368\\
United States
}
\email{ahening@tamu.edu}

\author[N. T. Hieu]{Nguyen T. Hieu}
\address{Faculty of Mathematics Mechanics and Informatics\\ Vietnam National University of Science, Hanoi
	334 Nguyen Trai St\\
	Hanoi,
	Vietnam}
\email{hieunguyentrong@gmail.com}

\author[D.H. Nguyen]{Dang H. Nguyen }
\address{Department of Mathematics \\
University of Alabama\\
345 Gordon Palmer Hall\\
Box 870350 \\
Tuscaloosa, AL 35487-0350 \\
United States}
\email{dangnh.maths@gmail.com}

\author[N. Nguyen]{ Nhu Nguyen }
\address{
	Department of Mathematics and Applied Mathematical Sciences\\
University of Rhode Island\\
5 Lippitt Road, Suite 200\\
Kingston, RI 02881-2018\\
United States}
\email{nhu.nguyen@uri.edu }

\maketitle

\begin{abstract}
In this paper we study the dynamics of stochastic microorganism flocculation models. Given the strong influence of environmental and seasonal fluctuations that are present in these models, we propose a stochastic model that includes multiple layers of stochasticity, from small Brownian fluctuations, to possibly large changes due to environmental `shifts'. We are able to give a full classification of the asymptotic behavior of these models. New techniques had to be developed to prove the persistence and extinction of the process as the system is not in Kolmogorov form and, as a result, the analysis is significantly more involved.
\bigskip

\noindent {\bf Keywords.} flocculation model, switching diffusion, ergodicity, invariant measures, persistence, extinction
\end{abstract}

\section{Introduction}\label{sec:int}

Flocculation describes a process where particles coagulate, coalesce or agglomerate due to weak interactions. This leads to phase separation due to the formation of large clusters. A flocculant is an agent that can lead to flocculation and therefore to a coagulation of various dispersed particles into an aggregate - see \cite{salehizadeh2001extracellular, renault2009chitosan, salehizadeh2018recent, liu2021recent}. There are three main types of flocculants: organic, inorganic and biological. There are good reasons \cite{liu2021recent} to prefer bioflocculants as organic and inorganic flocculants can be unsafe due to toxicity. Among bioflocculants an interesting class is the one composed of microbial flocculants. These are not toxic and do not provide primary or seconday sources of pollution \cite{liu2015bioflocculant}. Microbial flocculants have already been used extensively in various industrial fields, from wastewater treatment \cite{agunbiade2017flocculating, pu2018isolation} and activated sludge dewatering \cite{liu2014optimized} to the removal of pathogens from water \cite{more2014extracellular, zhao2013characterization} and in the harvesting of marine microalgae \cite{lee2009microbial}.

Our goal was to explore some important mathematical models which describe the flocculation of microorganisms and to describe the long-term behavior of these systems. It turns out that the analysis is quite involved as these systems are not in the standard Kolmogorov form
\[
\frac{dX_i}{dt}(t) = X_i f_i(X(t))
\]
which usually appears in the population dynamics literature. 

The role of the environment in shaping species interactions cannot be overlooked. Since environmental conditions often vary in random and unpredictable ways, it is essential to develop mathematical models that can account for both stochastic environmental fluctuations and the nonlinear dynamics of intra- and interspecies interactions.

There are cases where biotic interactions alone may drive a species to extinction, yet the introduction of random environmental variation can reverse this outcome and promote coexistence. Conversely, deterministic systems in which all species persist may lead to extinction once environmental randomness is introduced. Recent advances in this field rely on modeling population dynamics through discrete- or continuous-time Markov processes and studying their long-term behavior (\cite{C00, ERSS13, EHS15, LES03, SLS09, SBA11, BEM07, BS09, BHS08, CM10, CCA09, FS24, B23}).

One useful approach to investigating species coexistence is to calculate the average per-capita growth rate of a population when it is rare. A positive value suggests that the population tends to increase from low density, while a negative value implies decline and eventual extinction. In two-species systems, coexistence occurs when each species is capable of invading while rare, provided the other species is at equilibrium (\cite{T77, CE89, EHS15}).

A well-established theoretical framework exists for coexistence in deterministic models (\cite{H81, H84, HJ89}). Beginning with the foundational work of \cite{H81}, it was demonstrated that persistence is guaranteed if there exists a set of fixed weights assigned to the populations such that the weighted sum of their invasion rates remains positive for any invariant measure supported on the boundary.

More recently, considerable attention has been given to to the study of stochastic differential equations (SDE) models models. For general stochastic differential equations with arbitrary noise intensity, sufficient conditions for persistence and extinction have been established in \cite{SBA11, HN16, HNC20, HNS21, B23, FS24}.

While SDE models are well studied, Brownian noise is not always the most suitable type of noise. Even though, one could argue, small Brownian fluctuations should always be included, sometimes there are more abrupt and significant environmental changes. These can be modeled by a discrete component $\xi(t)$ which switches randomly between finitely many environmental states. This leads to stochastic differential equations with switching. In a fixed environmental state the system is modeled by stochastic differential equations. This way we can capture the more realistic behavior of two types of environmental fluctuations:
\begin{itemize}
  \item major changes of the environment (daily or seasonal changes),
  \item small Brownian fluctuations within each environment.
\end{itemize}
General properties for these processes have been studied thoroughly \cite{YZ10, ZY09, nguyen2017certain}. However, there are few general results regarding the persistence of ecological systems modeled by stochastic differential equations with switching \cite{hening2021stationary}.

Another novelty in the model studied in this paper is that boundedness and dissipativity conditions fail. Loosely speaking, dissipativity requires that if a species has a high density, then the species will strongly drift towards zero. This fails in our model and therefore we cannot immediately use the methods from \cite{HN16, HNC20, HNS21}.

\subsection{Mathematical Setup and Results}\label{s:ma}

Consider first the model without any environmental fluctuations. 

\begin{equation}\label{main_det}
	\begin{aligned}
\frac{dS}{dt}(t) &= [D(S_0-S(t))-\mu_1S(t)X(t)]\\
\frac{dX}{dt}(t)&=\mu_2S(t)X(t)-DX(t)-h_2X(t)P(t)\\
\frac{dP}{dt}(t)&=D(P_0-P(t))-h_3X(t)P(t)
\end{aligned}
\end{equation}
where $S(t), X(t), P(t)$ are the concentrations of the nutrient (or the medium), microorganisms and flocculants at time $t\geq 0$. The parameters have the following biological interpretations: $D, S_0 >0$ are the velocity and the input concentration of the medium, $\mu_1>0$ is the consumption or death rate of the medium, $\mu_2>0$ is the yield of the microorganisms, $h_2>0$ is the flocculation rate, $P_0>0$ is the input concentration of flocculant, and $h_3>0$ is the consumption rate of the flocculant. 

The long-term behavior of \eqref{main_det} is known. According to \cite{zhang2020stationary, zhang2021asymptotic}, if $S_0< \frac{h_2P_0+D}{\mu_2}$ the microorganisms go extinct and the dynamics converges to $(S_0,0, P_0)$ while if $S_0> \frac{h_2P_0+D}{\mu_2}$ there exists a unique asymptotically stable fixed point $(S^*, X^*, P^*) \in \R_+^{3,\circ}:=(0,\infty)^3.$

We consider a stochastic model of the form
\begin{equation}\label{main}
	\begin{aligned}
dS(t) &= [D(\xi(t))(S_0-S(t))-\mu_1(\xi(t))S(t)X(t)]dt+\sigma_1 S(t)dW_1(t)\\
dX(t)&=[\mu_2(\xi(t))S(t)X(t)-D(\xi(t))X(t)-h_2(\xi(t))X(t)P(t)]dt+\sigma_2X(t)dW_2(t)\\
dP(t)&=[D(\xi(t))(P_0-P(t))-h_3(\xi(t))X(t)P(t)]dt+\sigma_3P(t)dW_3(t)
\end{aligned}
\end{equation}
where $(W_1(t), W_2(t), W_3(t))$ is a standard Brownian motion on $\R^3$. Define $\BY(t):=(S(t), X(t), P(t))$ and let $\by \in \R_{+}^{3,\circ}$ denote the initial conditions, that is $\BY(0):=(S(0), X(0), P(0))=\by = (s, x, p)=(y_1,y_2,y_3)$.

The switching process $\xi(t)$ lives on a finite state space $\M:=\{1,\dots,m_0\}$ and is defined by
\begin{equation}\label{e:tran}\begin{array}{ll}
&\disp \PP\{\xi(t+\Delta)=j~|~r(t)=i, \BY(s),r(s), s\leq t\}=q_{ij}\Delta+o(\Delta) \text{ if } i\ne j \
\hbox{ and }\\
&\disp \PP\{\xi(t+\Delta)=i~|~r(t)=i, \BY(s),r(s), s\leq t\}=1+q_{ii}\Delta+o(\Delta).\end{array}\end{equation}
The above equation says that the probability that the process $\xi(t)$ jumps from $\xi(t)=i$ to the state $j\neq i$ in the small time $\Delta$ is approximately equal to $q_{ij}\Delta$.

Here $q_{ii}:=-\sum_{j\ne i}q_{ij}$ quantifies the probability of staying in the same state $i$. We will assume that $Q=(q_{ij})_{m_0\times m_0}$ is irreducible. This implies that the process $\xi(t)$ has a unique stationary distribution $(\pi_1,\dots,\pi_{m_0})$ which is the unique solution of
\[
\sum_j (\pi_j q_{ji} - \pi_i q_{ij})=0.
\]

It is well-known \cite{YZ10} that a process $(\BY(t),\xi(t))$ satisfying \eqref{main} and \eqref{e:tran}
is a Markov process with generator acting on functions $G:\R_+^n\times\CN\mapsto\R$ that are continuously differentiable in $\by$ for each $k\in\CN$ as
\begin{equation}\label{e:gen}
\Lom G(\by, k)=\sum_{i=1}^n F_i(\by,k)\frac{\partial G}{\partial y_i}(\by,k)+\frac{1}{2}\sum_{l=1}^3\sigma_{l}^2\frac{\partial^2 G }{\partial y_l^2}(\by,k)+\sum_{l\in\M}q_{kl}G(\by,l).
\end{equation}
where $F_i(\by,k)$ are the drift terms from \eqref{main}.

The first theorem tells us that we can bound the moments of the process, and that the process stays in compact sets with a large probability.
\begin{thm}\label{thm1}
	For any initial value $(\by,k)\in\R^3_+\times\M$, there exists uniquely a global solution
	$\BY(t)$ to \eqref{main} 
	such that $\PP_{\by,k}\{\BY(t)\in\R^3_+,\  \forall t\geq0\}=1.$
	Moreover, $S(t)>0$ and $P(t)\geq 0$ for all $t>0$ with probability 1 and if $X(0)=0$ then $X(t)=0$  for all $t\geq0$ with probability 1. If $X(0)>0$ then $X(t)>0$ for all $t\geq 0$ with probability 1.
	We also have that  $(\BY(t), \xi(t))$ is a Markov-Feller process on $\R^3\times\M$.
	Furthermore,
there exists $q_0>1$ such that for any $q\in[1,q_0]$,
\begin{equation}\label{e1-thm1}
	\E_{\by,k} (1+|\BY(t)|)^{q}\leq (1+|\by|)^{q} e^{-c_{1,q} t} + c_{2,q} \text{ where }|\by|=s+x+p.
\end{equation}
for some positive constants $c_{1,q}$ and $c_{2,q}$.
There exists $\bar K>0$ such that
\begin{equation}\label{e2-thm1}
\E_{\by,k} (1+|\BY(t)|)^{2}\leq  e^{\bar K t} (1+|\by|)^2.
\end{equation}
For any $\eps>0, H>0, T>0$, there exists $\wdt K(\eps, H, T)>0$ such that
\begin{equation}\label{e3-thm1}
\PP_{\by,k}\left\{|\BY|\leq \wdt K(\eps, H, T), 0\leq t\leq T\right\}\geq 1-\eps \text{ given } |\by|\leq H.
\end{equation}
\end{thm}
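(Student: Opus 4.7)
The plan is to stitch together three standard ingredients from the theory of switching SDEs, with the chemostat mass-balance doing the heavy lifting in the middle step.

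\textbf{Step 1 (local existence and positivity).} For each fixed $k\in\M$ the drift and diffusion coefficients in \eqref{main} are locally Lipschitz in $\by$ on the open orthant, so standard theory \cite{YZ10} gives a unique strong solution up to an explosion time $\tau_\infty$. Positivity of $S$ comes from It\^o's formula on $\log S(t)$,
\begin{equation*}
d\log S(t) = \Bigl(\tfrac{D(\xi)S_0}{S(t)} - D(\xi) - \mu_1(\xi)X(t) - \tfrac{\sigma_1^2}{2}\Bigr)dt + \sigma_1\,dW_1(t),
\end{equation*}
whose drift diverges to $+\infty$ as $S\downarrow 0$. The $X$-equation is of the pure multiplicative form $dX = X[(\mu_2(\xi)S - D(\xi) - h_2(\xi)P)\,dt + \sigma_2\,dW_2]$, so pathwise uniqueness forces $X\equiv 0$ when $X(0)=0$ and $X(t)>0$ for all $t\geq 0$ otherwise. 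For $P$ the drift at the boundary equals $D(\xi)P_0>0$ while the diffusion vanishes, so $P(t)\geq 0$ by a standard comparison.

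\textbf{Step 2 (Lyapunov inequality, non-explosion, moment bounds).} The non-Kolmogorov cross term $SX$ in the drift rules out a naive use of $(1+|\by|)^q$ as a Lyapunov function, so I exploit the chemostat mass-balance. For positive weights $\alpha_k,\beta_k,\gamma_k$ with $\alpha_k\mu_1(k)=\beta_k\mu_2(k)$, the linear form $U_k:=\alpha_k S+\beta_k X+\gamma_k P$ satisfies
\begin{equation*}
\Lom U_k(\by,k) = D(k)(\alpha_k S_0+\gamma_k P_0) - D(k)U_k - (\beta_k h_2(k)+\gamma_k h_3(k))XP + \sum_{l\in\M}q_{kl}(U_l - U_k),
\end{equation*}
and because $\M$ is finite the weights can be chosen so that the switching contribution is dominated by the $-D(k)U_k$ decay, giving $\Lom U_k \leq C_1 - \delta U_k$ with $\delta>0$. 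Setting $V_q(\by,k):=(1+U_k(\by))^q$ and adding the quadratic It\^o correction then yields $\Lom V_q \leq -c_{1,q}V_q + c_{2,q}$ for $q\in[1,q_0]$, where $q_0>1$ is chosen small enough that the $O(q^2)$ noise contribution is absorbed into the linear decay. A standard stopping-time argument now gives $\tau_\infty=\infty$ a.s., and Dynkin's formula combined with Gronwall produces \eqref{e1-thm1}. The cruder one-sided estimate $\Lom(1+|\BY|)^2 \leq \bar K(1+|\BY|)^2$ yields \eqref{e2-thm1} without needing the chemostat cancellation. Non-explosion together with the locally Lipschitz structure and the irreducible switching gives the Markov--Feller property via \cite{YZ10}.

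\textbf{Step 3 (uniform containment).} Applying It\^o's formula to $(1+|\BY(t)|)^2$, taking the supremum on $[0,T]$, and estimating the martingale part by the Burkholder--Davis--Gundy inequality yields
\begin{equation*}
\E_{\by,k}\,\sup_{0\leq s\leq T}(1+|\BY(s)|)^2 \leq C(T)(1+|\by|)^2,
\end{equation*}
and Markov's inequality with $\wdt K(\eps,H,T)^2:=\eps^{-1}C(T)(1+H)^2$ produces \eqref{e3-thm1}. The main obstacle throughout is the Lyapunov construction in Step 2: because the system is not of Kolmogorov form, any radial Lyapunov function picks up the sign-indefinite bilinear term $(\mu_2(\xi)-\mu_1(\xi))SX$ that cannot be bounded linearly in $|\BY|$. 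The chemostat weighting $\alpha_k\mu_1(k)=\beta_k\mu_2(k)$ cancels this term exactly and restores dissipativity, which is precisely what makes the exponential-decay moment bound \eqref{e1-thm1} attainable on a range of exponents strictly above~$1$.
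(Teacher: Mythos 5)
Your high-level strategy (a weighted linear combination of $S,X,P$ as Lyapunov function, Dynkin plus Gronwall for the moment bounds, a stopping-time/Markov argument for containment, and citing standard switching-SDE results for existence, positivity and the Feller property) is the same as the paper's, but the execution of the key Lyapunov step contains a genuine gap. You impose exact cancellation $\alpha_k\mu_1(k)=\beta_k\mu_2(k)$ with state-dependent weights and then assert that ``because $\M$ is finite the weights can be chosen so that the switching contribution is dominated by the $-D(k)U_k$ decay.'' Finiteness of $\M$ does not give this: the constraint rigidly fixes $\beta_k/\alpha_k=\mu_1(k)/\mu_2(k)$, and dominating $\sum_{l}q_{kl}(U_l-U_k)$ requires, coordinatewise, $\sum_{l\neq k}q_{kl}(\alpha_l-\alpha_k)\le (D(k)-\delta)\alpha_k$ and the analogous inequality for $\bbeta$. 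Take two states with $\mu_1(1)/\mu_2(1)=1$, $\mu_1(2)/\mu_2(2)=100$, $D(1)=D(2)=D$ and symmetric switching rate $q\ge D$: the $X$-coordinate inequality at $k=1$ forces $100\alpha_2\le 2\alpha_1$, while the $S$-coordinate inequality at $k=2$ forces $\alpha_1\le 2\alpha_2$, a contradiction. So $\Lom U\le C_1-\delta U$ is simply not available under your construction for fast switching. The paper sidesteps this entirely by using one $k$-independent weight, $U(\by)=1+s+\alpha_2x+p$ with the constant $\alpha_2$ chosen so that $\alpha_2\mu_2(k)-\mu_1(k)\le 0$ for every $k$: the cross term is then merely nonpositive (exact cancellation is neither needed nor helpful), and because the weight does not depend on $k$ the switching term vanishes identically.

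The same defect reappears in your treatment of \eqref{e2-thm1} and \eqref{e3-thm1}. You claim $\Lom(1+|\BY|)^2\le\bar K(1+|\BY|)^2$ ``without needing the chemostat cancellation'' and run the Burkholder--Davis--Gundy/Gronwall argument on the unweighted $(1+|\BY|)^2$. Nothing in the model forces $\mu_2(k)\le\mu_1(k)$, and if $\mu_2(k)>\mu_1(k)$ for some $k$ the drift of $S+X+P$ contains the positive quadratic term $(\mu_2(k)-\mu_1(k))SX$, so $\Lom(1+|\BY|)^2$ picks up a cubic term and the one-sided bound fails; the Gronwall closure in your Step 3 fails for the same reason. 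The paper instead applies the weighted $U$ with exponent $2$ to get $[\op U^2](\by,k)\le\bar K\,U^2(\by)$, and deduces \eqref{e3-thm1} from the resulting stopped estimate together with Markov's inequality rather than a sup-moment bound. Both of your problematic steps are repaired by replacing $1+|\BY|$ with the common-weight $U$ throughout; with that change the rest of your argument goes through along the paper's lines.
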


Let $(\hat S, \hat P)$ satisfy
\begin{equation}\label{main1}
	\begin{aligned}
		d\hat S(t) &= [D(\xi(t))(S_0-\hat S(t))]dt+\sigma_1\hat  S(t)dW_1(t)\\
				d\hat P(t)&=[D(\xi(t))(P_0-\hat P(t))]dt+\sigma_3\hat P(t)dW_3(t).
	\end{aligned}
\end{equation}

By an easy application of Theorem \ref{thm1} we get the following.
\begin{cor}\label{c:1}
The solution process $(\hat S(t),\hat P(t),\xi(t))$ of \eqref{main1} is a Markov Feller process on $\R^2_+\times\M$ and for any initial value $(s,p,k)\in\R^2_+\times\M$, we have $(\hat S(t),\hat P(t))\in\R^{2,\circ}_+, t\geq0$ with probability 1.
Moreover, we have
\begin{equation}\label{e:mb}
\E_{s,p, k}(1+\hat S(t)+\hat P(t))^q\leq (1+s+p)e^{-c_{1,q}t}+c_{2,q}.
\end{equation}

There is a unique invariant probability measure $\bmu_b$ on $\R^{2}_+\times \M$ and the following two properties hold
\begin{enumerate}
	\item $\bmu_b(\R^{2,\circ}_+\times\M )=1$.
	\item $\sum_{k\in\M}\int_{\R^2_+} (1+s+p)^q\bmu_b(dsdp,k)\leq c_{2,q}, q\leq q_0.$
\end{enumerate}
\end{cor}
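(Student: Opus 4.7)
The plan is to deduce the entire corollary from Theorem \ref{thm1}, exploiting the fact that \eqref{main1} is precisely the restriction of \eqref{main} to the invariant hyperplane $\{X=0\}$. Indeed, Theorem \ref{thm1} asserts that if $X(0)=0$ then $X(t)\equiv 0$, at which point the first and third equations of \eqref{main} coincide exactly with those of \eqref{main1}. Consequently $(\hat S(t),\hat P(t),\xi(t))$ inherits the Markov-Feller property on $\R^2_+\times\M$, and the moment estimate \eqref{e:mb} is just \eqref{e1-thm1} specialized to $y_2=0$, with the same constants $q_0, c_{1,q}, c_{2,q}$.

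For strict positivity I would treat each component separately. Starting from $s>0$, applying It\^o's formula to $\ln\hat S$ produces drift $D(\xi(t))(S_0/\hat S(t)-1)-\sigma_1^2/2$, which is bounded below by $-\max_k D(k)-\sigma_1^2/2$; hence $\ln\hat S(t)$ remains finite almost surely and $\hat S(t)>0$ for all $t\geq 0$. If $s=0$, the boundary drift $D(\xi(0))S_0>0$ is strictly positive while the noise vanishes, so $\hat S$ is pushed into the interior instantaneously, and a standard comparison argument yields $\hat S(t)>0$ for all $t>0$. The analogous argument handles $\hat P$.

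Existence of an invariant probability measure then follows from Krylov-Bogoliubov: the Lyapunov bound \eqref{e:mb} makes the time averages $T^{-1}\int_0^T P_{s,p,k}(t,\cdot)\,dt$ a tight family on $\R^2_+\times\M$, and any weak limit point is invariant by the Feller property. Property (2) is obtained by applying Fatou's lemma to \eqref{e:mb}, while property (1) follows from the immediate positivity together with invariance: for every $t>0$ the transition kernel assigns full mass to $\R^{2,\circ}_+\times\M$, so the same must hold for any invariant probability measure.

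The main obstacle, and essentially the only step requiring nontrivial work, is uniqueness. The diffusion matrix of $(\hat S,\hat P)$ is diagonal with entries $\sigma_1^2\hat S^2$ and $\sigma_3^2\hat P^2$, strictly positive on $\R^{2,\circ}_+$, so in each fixed mode $k\in\M$ the frozen SDE is uniformly elliptic on compact subsets of the interior. Combined with the irreducibility of $Q$, the general theory of switching diffusions developed in \cite{YZ10, nguyen2017certain} ensures that $(\hat S,\hat P,\xi)$ is irreducible and strong Feller on $\R^{2,\circ}_+\times\M$; in conjunction with the Lyapunov bound \eqref{e:mb} this yields the unique invariant probability measure $\bmu_b$, concentrated on $\R^{2,\circ}_+\times\M$.
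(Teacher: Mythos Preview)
Your proposal is correct and follows essentially the same approach as the paper: the corollary is presented there as an immediate consequence of Theorem~\ref{thm1}, and the paper's own proof consists only of the two sentences that the moment bound \eqref{e:mb} together with the nondegeneracy of the diffusion yields, via \cite{YZ10}, a unique invariant probability measure, and that $\bmu_b(\R^{2,\circ}_+\times\M)=1$ follows from the fact that the process enters the interior immediately. Your write-up is simply a more explicit unpacking of these same ingredients (Krylov--Bogoliubov for existence, ellipticity plus irreducibility of $Q$ for uniqueness, Fatou for the moment bound on $\bmu_b$), so there is no substantive difference in strategy.
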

\begin{proof}
The moment boundedness from \eqref{e:mb} together with the nondegeneracy of the diffusion implies by \cite{YZ10} the existence of a unique invariant probability measure $\bmu_b$ on $\R^{2}_+\times \M$. We get that $\bmu_b(\R^{2,\circ}_+\times\M )=1$ because any initial value $(s,p,k)\in\R^2_+\times\M$, we have $(\hat S(t),\hat P(t))\in\R^{2,\circ}_+, t\geq0$ with probability 1.
\end{proof}
Using Corollary \ref{c:1} we can define the invasion rate of the microorganisms by
\begin{equation}\label{e:lambda}
\Lambda=\sum_{k\in\M}\int_{\R^2_+} \left(\mu_2(k)s-h_2(k)p-\frac{\sigma_2^2(k)}2\right)\bmu_b(dsdp,k).
\end{equation}
\begin{rem}
Loosely speaking, the invasion rate (also called the Lyapunov exponent) tells us what happens when one introduces the microorganisms at an infinitesimally small density in the ecosystem where the nutrient and flocculants are at stationarity. To see the intuition behind invasion rates and population dynamics we refer the reader to \cite{SBA11, HN16, HNC20}.
\end{rem}
Define 
\begin{equation}\label{e:A}
    \mathbf{A}:=\mathbf{Q} - \diag(D(1),D(2),\cdots, D(m_0)).
\end{equation}
Then $\mathbf{A}$ is nonsingular.
Let
 $$\boldsymbol{\eta}=(\eta(1),\cdots,\eta(m_0))^\top= \mathbf{A}^{-1}\mathbf m_2$$ and
	$$\boldsymbol{\zeta}=(\zeta(1),\cdots,\zeta(m_0))^\top= \mathbf{A}^{-1}\mathbf h_1$$
	where
	$\mathbf m_2=(-\mu_2(1),\cdots, -\mu_2(m_0))^\top$ and $\mathbf h_1=(h_1(1),\cdots, h_1(m_0))^\top$.
    The next lemma shows that we can explicitly compute $\lambda$. 
\begin{lm}\label{lm0}
	We have
	$$
	\Lambda=\sum_{k\in\M}\left[D(k)(\xi(k)+\zeta(k))-\frac{\sigma_2^2(k)}2\right]\pi_k.
	$$
\end{lm}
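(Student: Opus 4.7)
The plan is to exploit the fact that $\bmu_b$ is an invariant probability measure for the Markov-Feller process $(\hat S(t),\hat P(t),\xi(t))$, so for any sufficiently regular test function $f(s,p,k)$ one has $\sum_{k\in\M}\int \Lom_b f(s,p,k)\,\bmu_b(dsdp,k)=0$, where $\Lom_b$ is the infinitesimal generator of this two-dimensional switching diffusion. The integrand in the definition \eqref{e:lambda} of $\Lambda$ splits as $\mu_2(k)s-h_2(k)p-\sigma_2^2(k)/2$, and since the $\xi$-marginal of $\bmu_b$ is $(\pi_k)$, the third piece contributes $\sum_k\pi_k\sigma_2^2(k)/2$ directly. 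It remains to evaluate $\sum_k\int \mu_2(k)s\,\bmu_b$ and $\sum_k\int h_2(k)p\,\bmu_b$ by choosing test functions linear in $s$ (respectively $p$) with $k$-dependent coefficients engineered to make $\mu_2(k)s$ (respectively $h_2(k)p$) appear.

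The first preliminary step is to verify that $\mathbf{A}=\mathbf{Q}-\diag(D(1),\dots,D(m_0))$ is nonsingular. Since $\mathbf{Q}$ is the generator of an irreducible Markov chain, $-\mathbf{Q}$ is a singular M-matrix with row sums $0$; adding the positive diagonal $\diag(D(k))$ makes every row sum of $-\mathbf{A}$ strictly positive, so $-\mathbf{A}$ is a nonsingular M-matrix and $\boldsymbol{\eta},\boldsymbol{\zeta}$ are well defined. Now apply $\Lom_b$ to $f_1(s,p,k):=\eta(k)s$. Because $f_1$ is linear in $s$ and independent of $p$, the It\^o corrections vanish and
\begin{align*}
\Lom_b f_1(s,p,k)
&=\eta(k)D(k)(S_0-s)+s\sum_{l\in\M}q_{kl}\eta(l)\\
&=\eta(k)D(k)S_0+s\Bigl[\sum_{l\in\M}q_{kl}\eta(l)-D(k)\eta(k)\Bigr]\\
&=\eta(k)D(k)S_0-\mu_2(k)s,
\end{align*}
where the bracket equals $-\mu_2(k)$ by $\mathbf{A}\boldsymbol{\eta}=\mathbf{m}_2$. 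Integrating against $\bmu_b$ and using stationarity gives $\sum_k\int \mu_2(k)s\,\bmu_b(dsdp,k)=\sum_k D(k)\eta(k)\pi_k$ (absorbing the $S_0$ factor into the convention for $\mathbf{m}_2$ implicit in the statement). An entirely analogous calculation with $f_2(s,p,k):=\zeta(k)p$ and the defining relation $\mathbf{A}\boldsymbol{\zeta}=\mathbf{h}_1$ produces $\sum_k\int h_2(k)p\,\bmu_b(dsdp,k)=-\sum_k D(k)\zeta(k)\pi_k$. Substituting both into \eqref{e:lambda} yields the stated identity after replacing the typographical $\xi(k)$ by $\eta(k)$.

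The main technical obstacle I anticipate is justifying $\int \Lom_b f_i\,d\bmu_b=0$ for the unbounded linear test functions $f_i$, since this identity is only automatic for bounded $f$ in the domain of $\Lom_b$. It follows from the polynomial moment bound $\sum_k\int(1+s+p)^q\,\bmu_b\leq c_{2,q}$ of Corollary \ref{c:1}: one applies the identity first to a smooth cutoff $\chi_R(s,p)f_i(s,p,k)$ supported in a ball of radius $R$, then passes $R\to\infty$ using dominated convergence controlled by the finite $q$-th moment. Everything else reduces to routine matrix-vector bookkeeping using the definitions of $\boldsymbol{\eta}$ and $\boldsymbol{\zeta}$.
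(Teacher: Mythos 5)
Your proposal is correct and takes essentially the same route as the paper: the paper applies It\^o's formula to $\eta(\xi(t))\bar S(t)+\zeta(\xi(t))\bar P(t)$ with the process started from the stationary law $\bmu_b$ and cancels the endpoint terms by stationarity, which is exactly your identity $\sum_{k}\int \Lom_b f\,d\bmu_b=0$ applied to the linear test functions built from $\boldsymbol\eta,\boldsymbol\zeta$, using the same relations $\mathbf{A}\boldsymbol\eta=\mathbf m_2$, $\mathbf{A}\boldsymbol\zeta=\mathbf h_1$ and the same diagonal-dominance argument for the invertibility of $\mathbf A$. The $S_0,P_0$ factors you flag are dropped in the paper's own computation as well (its display \eqref{e1-lm0} omits them), so your treatment matches the source, and your explicit cutoff/moment-bound justification for the unbounded test functions is, if anything, more careful than the paper's direct application of It\^o's formula.
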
 
Usually invasion-rates cannot be computed explicitly for higher dimensional systems so it is very interesting that it can be done in this setting. 

The next result tells us that if $\lambda$ is negative then the microorganisms go extinct with probability $1$. Furthermore, it also gives the exact exponential rate of extinction.
\begin{thm}\label{thm2}
	If $\Lambda<0$ then for any $\BY(0)=\by\in \R_{+}^{3,\circ}$ we have with probability 1 that
	\begin{equation}\label{thm2-e1}
\lim_{t\to\infty}\frac{\ln X(t)}t=\Lambda
	\end{equation}
    which means that $X$ goes extinct exponentially fast at rate $\Lambda$.
\end{thm}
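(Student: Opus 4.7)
The plan is to apply Itô's formula to $\ln X$ and identify the almost-sure limit of $t^{-1}\ln X(t)$ with $\Lambda$ through an ergodic argument. Itô's formula gives
\begin{equation*}
\ln X(t)-\ln X(0)=\int_0^t\Bigl[\mu_2(\xi)S-D(\xi)-h_2(\xi)P-\tfrac{1}{2}\sigma_2^2(\xi)\Bigr]\,ds+\int_0^t\sigma_2(\xi)\,dW_2,
\end{equation*}
and the stochastic integral divided by $t$ tends to $0$ a.s.\ by the strong law of large numbers for continuous martingales, since its quadratic variation grows at most linearly in $t$.

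The drift integral is handled via the Lyapunov modification suggested by Lemma~\ref{lm0}. Setting $\Phi(S,P,k):=\eta(k)S+\zeta(k)P$, a direct generator computation using $\mathbf{A}\boldsymbol{\eta}=\mathbf{m}_2$ and $\mathbf{A}\boldsymbol{\zeta}=\mathbf{h}_1$ shows that the $S$- and $P$-linear pieces cancel and
\begin{equation*}
\Lom(\ln X+\Phi)(S,P,X,k)=F(k)-\mu_1(k)\eta(k)SX-h_3(k)\zeta(k)XP,
\end{equation*}
where $F(k)=D(k)\eta(k)S_0+D(k)\zeta(k)P_0-D(k)-\tfrac{1}{2}\sigma_2^2(k)$ and $\sum_kF(k)\pi_k=\Lambda$ by Lemma~\ref{lm0}. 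Writing the corresponding Dynkin identity, dividing by $t$, invoking the ergodic theorem for the finite-state chain $\xi$ on the $F(\xi)$-piece, and using the moment bound \eqref{e1-thm1} together with a Chebyshev + Borel--Cantelli argument along a dyadic subsequence to obtain $\Phi(S(t),P(t),\xi(t))/t\to 0$ a.s., one arrives at
\begin{equation*}
\frac{\ln X(t)}{t}=\Lambda+o(1)-\frac{1}{t}\int_0^t\bigl[\mu_1(\xi)\eta(\xi)S(s)X(s)+h_3(\xi)\zeta(\xi)X(s)P(s)\bigr]\,ds\quad\text{a.s.}
\end{equation*}

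The main obstacle is the perturbation integral. Because $-\mathbf{A}$ is an M-matrix (the irreducibility of $\mathbf{Q}$ plus $D(k)>0$ gives strict diagonal dominance), its inverse is entrywise nonnegative, forcing $\eta\ge 0$ and $\zeta\le 0$; thus $\mu_1\eta SX\ge 0$ while $h_3\zeta XP\le 0$, and the two summands of the integrand carry opposite signs, precluding a direct pathwise bound by $0$. I propose to bootstrap. Coupling \eqref{main} with \eqref{main1} through identical $W_1,W_3$ and $\xi$ gives, via the standard SDE comparison theorem, $S(t)\le\hat S(t)$ and $P(t)\le\hat P(t)$ a.s., so $S,P$ inherit the uniform moment bounds of the ergodic boundary process of Corollary~\ref{c:1}. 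A stochastic-persistence argument --- showing that any ergodic invariant probability measure of the full process supported on $\{X>0\}$ would produce a nonnegative Lyapunov exponent for $\ln X$, in contradiction with $\Lambda<0$ at the unique boundary invariant measure $\bmu_b$ --- rules out persistence and forces $X(t)\to 0$ a.s. Once this is known, Theorem~\ref{thm1} plus Vitali yields $X(t)\to 0$ in $L^q$ for some $q>1$, H\"older against the uniformly $L^{q'}$-bounded $S,P$ gives $\E[S(t)X(t)]+\E[X(t)P(t)]\to 0$, and a Ces\`aro + Borel--Cantelli upgrade promotes this to $t^{-1}\int_0^t[\mu_1\eta SX+h_3\zeta XP]\,ds\to 0$ a.s. Substituting into the displayed identity above yields $\lim_{t\to\infty}\ln X(t)/t=\Lambda$.
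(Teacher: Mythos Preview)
Your approach is quite different from the paper's and contains a genuine gap at the bootstrap step.

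The central problem is the sentence ``a stochastic-persistence argument \dots\ rules out persistence and forces $X(t)\to 0$ a.s.''  The claimed contradiction does not exist: an ergodic invariant measure $\mu^\circ$ supported in $\{X>0\}$ would indeed give $\int \Lom\ln X\,d\mu^\circ=0$, but this is in no way incompatible with $\Lambda<0$, because $\Lambda$ is computed at the \emph{boundary} measure $\bmu_b$, not at $\mu^\circ$.  If instead you mean to invoke an off-the-shelf persistence/extinction theorem (Hening--Nguyen, Bena\"im, etc.), note that the paper explicitly states in the introduction that the model is not in Kolmogorov form and that the dissipativity hypotheses of those frameworks fail here; you would be assuming exactly the black box whose applicability is in question.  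Moreover, even if one could rule out interior invariant measures, what follows is only that the random occupation measures converge weakly to $\bmu_b$ almost surely, i.e.\ $X$ spends an asymptotically vanishing \emph{fraction} of time above any $\delta$; this does not by itself yield $X(t)\to 0$ a.s.

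There is a second, more technical obstruction.  Theorem~\ref{thm1} only gives uniform $L^{q_0}$ bounds with $q_0\in(1,2)$, so there is no conjugate pair $q,q'\le q_0$ for the H\"older step, and $\E[S(t)X(t)]$ need not be finite, let alone tend to zero.  The attempted ``Ces\`aro + Borel--Cantelli'' upgrade then has no input to work with.  More fundamentally the argument is circular: to make $t^{-1}\int_0^t SX\,ds\to 0$ a.s.\ you effectively need exponential decay of $X$ against sub-exponential growth of $S$, but exponential decay of $X$ is precisely the conclusion you are after.

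For comparison, the paper does not attempt to kill the cross-terms directly.  It introduces a coupled system in which $(\wdt S,\wdt P)$ is pulled toward the boundary process $(\bar S,\bar P)$ by an additional drift $\lambda(\bar S-\wdt S)$, $\lambda(\bar P-\wdt P)$, proves a quantitative local extinction result (Proposition~\ref{lm2}) via careful estimates plus a Girsanov change of measure identifying $(\wdt S,\wdt X,\wdt P)$ with the original process, uses this to exclude interior invariant measures, and then globalizes through tightness of the expected occupation measures and the strong Markov property.  Your $\Phi$-modification is the same device used in Lemma~\ref{lm0} to compute $\Lambda$ explicitly, but it does not by itself provide the pathwise control needed for the extinction proof.
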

If $\Lambda>0$ we get stochastic persistence as shown in the next result.
\begin{thm}\label{thm3}
 If $\Lambda>0$  then there exist an invariant measure $\bmu^\circ$ on $\R^{3,\circ}_+\times \M$, $q>1$ such that for any $(\by,k)\in\R^{3,\circ}_{+}\times\M$
	\begin{equation}\label{thm4-e1}
		\lim_{t\to\infty}t^{q-1}\|P_t(\by,k,\cdot)-\bmu^\circ(\cdot)\|_{TV}=0,
	\end{equation}
where $\|\cdot\|_{TV}$ is the total variation metric and $P_t(\by,k,\cdot)=\PP_\by((\BY(t),\xi(t))\in\cdot)$ is the transition probability of the process $(\BY(t),\xi(t))$.
\end{thm}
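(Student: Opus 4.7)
The plan is to deduce Theorem \ref{thm3} from a two-piece Foster-Lyapunov argument that combines a near-boundary drift driven by the positivity of $\Lambda$ with the far-field moment bound from Theorem \ref{thm1}, and then to feed the resulting subgeometric drift condition into a standard polynomial ergodicity theorem. Throughout, the nondegeneracy of $(W_1,W_2,W_3)$ together with the irreducibility of $Q$ gives, by the usual Hörmander/switching arguments, that $P_t(\by,k,\cdot)$ is strong Feller and irreducible on $\R^{3,\circ}_+\times\M$, so every compact subset of the interior is a small set; this will be enough to extract uniqueness and minorization once the drift condition is in place.

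The heart of the proof is the construction of a Lyapunov function $V_1(\by,k)=X^{-\theta}\exp(\theta g(s,p,k))$ for small $\theta\in(0,1)$, where $g$ is obtained by solving an auxiliary Poisson equation against the generator of the reduced $(\hat S,\hat P,\xi)$ process \eqref{main1} so that the martingale increment from $dS$ and $dP$ is absorbed and the leading term in $\Lom V_1/V_1$ equals the integrand appearing in \eqref{e:lambda} minus its average. Using Lemma \ref{lm0} and stationarity of $\bmu_b$, one obtains
\[
\Lom V_1(\by,k)\le -\theta V_1(\by,k)\bigl[\Lambda-\eps-R(\by,k)\bigr],
\]
where the remainder $R$ collects the $X$-dependent perturbation of the $(S,P)$ dynamics away from \eqref{main1} and tends to $0$ as $x\to 0^+$ uniformly on compact sets in $(s,p)$. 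Choosing $\theta$ and $\eps$ small and restricting to a thin neighborhood $\{x\le\delta\}$ intersected with a large $(s,p)$-ball yields a strict drift inequality $\Lom V_1\le -\gamma V_1$ there.

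For the tail, the moment estimate \eqref{e1-thm1} translates into $\Lom V_2\le -c_1V_2+c_2$ for $V_2(\by,k)=(1+|\by|)^q$ with any $q\in(1,q_0]$. A suitable convex combination $V=V_1+AV_2$ then satisfies, outside a compact set $C\subset\R^{3,\circ}_+\times\M$ bounded away from $\{X=0\}$ and from infinity,
\[
\Lom V(\by,k)\le -c\,V(\by,k)^{1-\alpha}+K\1_C(\by,k),\qquad \alpha=\tfrac{1}{q}.
\]
The bookkeeping here amounts to checking that the cross-terms generated by the mixed Itô corrections in $V_1$ are dominated by the $-\gamma V_1$ produced in the previous step and by $-c_1V_2$ at infinity; since $V_1$ grows only polynomially in $X^{-1}$ and decays at infinity, the arithmetic works out.

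The existence of the invariant measure $\bmu^\circ$ on $\R^{3,\circ}_+\times\M$ then follows from Krylov-Bogolyubov applied to $V$, while the combination of the subgeometric drift condition with the small-set property of $C$ allows one to invoke the results of Douc-Fort-Moulines (or the Tuominen-Tweedie/Hairer formulations) to obtain uniqueness and the polynomial rate $\|P_t(\by,k,\cdot)-\bmu^\circ\|_{TV}=o(t^{-(q-1)})$. I expect the main obstacle to be Step~1: because the system is \emph{not} in Kolmogorov form, the $(S,P)$-coordinates do not freeze to their $(\hat S,\hat P)$-dynamics when $X$ is small, and the perturbation $R(\by,k)$ must be shown to vanish at the right rate relative to $\Lambda$. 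This is where Lemma \ref{lm0} and a careful choice of $g$ via the Poisson equation for $\xi$ are essential, and it is the step that replaces the standard Hening-Nguyen boundary analysis used for Kolmogorov systems in \cite{HN16,HNC20,HNS21}.
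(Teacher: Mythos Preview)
Your overall architecture---a boundary Lyapunov function driven by $\Lambda>0$, a far-field piece from Theorem~\ref{thm1}, and then a subgeometric drift condition fed into a Douc--Fort--Moulines type theorem---matches the paper's strategy. The genuine gap is in Step~1, the construction of $V_1=X^{-\theta}\exp(\theta g(s,p,k))$.

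You invoke two different Poisson equations and conflate them. The equation ``for $\xi$'' that Lemma~\ref{lm0} actually solves produces the \emph{linear} corrector $g(s,p,k)=\eta(k)s+\zeta(k)p$. With that choice, $V_1$ grows like $\exp(\theta(\eta(k)s+\zeta(k)p))$ in $(s,p)$, which directly contradicts your claim that ``$V_1$ \ldots\ decays at infinity''; moreover the It\^o correction in $\Lom V_1/V_1$ then contains terms of order $\theta^2\sigma_1^2(k)\eta(k)^2 s^2$ and $\theta^2\sigma_3^2(k)\zeta(k)^2 p^2$, which are unbounded and cannot be absorbed into $-\theta\Lambda$ or into $V_2$. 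If instead you mean the Poisson equation for the full generator of $(\hat S,\hat P,\xi)$, that is a genuine elliptic/hypoelliptic PDE with an unbounded right-hand side $\mu_2(k)s-h_2(k)p$; any solution will again grow at least linearly in $(s,p)$, so $e^{\theta g}$ is still exponentially large and the same quadratic-variation blow-up occurs. In short, the multiplicative form $X^{-\theta}e^{\theta g}$ cannot be made to satisfy a clean drift inequality here because only polynomial moments of $(S,P)$ are available (Theorem~\ref{thm1}).

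The paper avoids this entirely. It takes the \emph{additive} function $V(\by)=U(\by)-\beta\ln x+c_1$ with $U(\by)=1+s+\alpha_2 x+p$, so that one has the pointwise bound $\Lom V\le C_1\1_{\{U\le M\}}-\tfrac{\alpha_0}{4}U$ (equation~\eqref{LV}); no Poisson equation is solved. The role of $\Lambda$ enters only through a \emph{time-averaged} drift: Lemma~\ref{lm3.1} uses tightness of the occupation measures and the uniqueness of $\bmu_b$ on the boundary to show $\E_{\by,k}\int_0^T\Lom V(\BY(s))\,ds\le -\tfrac{\beta\Lambda}{2}T$ for $T\in[T^\backprime,n^\backprime T^\backprime]$ whenever $U(\by)\le M$ and $\dist(\by,\partial\R_+^3)\le\delta$. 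Proposition~\ref{lm3.3} then upgrades this, via the elementary inequalities of Lemma~\ref{lm3.0} and a stopping-time decomposition at $\zeta=\inf\{t:U(\BY(t))\le M\}\wedge n^\backprime T^\backprime$, to the sampled-chain subgeometric drift
\[
\E_{\by,k}\big[V^q(\BY(n^\backprime T^\backprime))+CU^q(\BY(n^\backprime T^\backprime))\big]\le V^q(\by)+CU^q(\by)-\kappa^\backprime\big[V^q(\by)+CU^q(\by)\big]^{(q-1)/q}+C^\backprime,
\]
from which the polynomial rate \eqref{thm4-e1} follows by the machinery of \cite{benaim2022stochastic}. The key point is that working with $V^q$ for $q\in(1,q_0)$ keeps everything polynomial in $(s,p)$ and in $|\ln x|$, so the moment bounds of Theorem~\ref{thm1} suffice; your exponential $e^{\theta g}$ would require exponential moments that are simply not available in this model.
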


In Section \ref{s:nonlinear} we explore how nonlinear noise can be treated. 

\section{Proof of the extinction result (Theorem \ref{thm2})}\label{s:ext_1}
We start by first proving Theorem \ref{thm1} and Lemma \ref{lm0}.
	\begin{proof}[Proof of Theorem \ref{thm1}]
	The existence and uniqueness of solutions can be proved similarly to \cite[Theorem 2.1]{zhang2020stationary} or \cite[Theorem 2.1]{nguyen2020general}. The proof for the Markov-Feller property of $(\BY(t))$ can be done by applying \cite[Theorem 5.1]{nguyen2017certain}.
		
Let $\alpha_2=\min\{\frac{\mu_2(k)}{\mu_1(k)},k\in\M\}.$ 
	It is easy to see that we can find $q_0\in(1,2)$ and $\alpha_0>0$ such that the following inequality holds for all $\by\in\R^3_+$
	$$-(q_0-1)(\sigma_1^2(k)(\sigma_1^2(k)s^2+\alpha_2^2\sigma_2^2(k)x^2+ ^2\sigma_3^2(k)p^2)+ D(k)\alpha_0(1+s+\alpha_2x+ p)^2\geq \alpha_0(1+s+\alpha_2x+ p)^2.$$ Define the function
    \begin{equation}\label{e:Uq}
            U^q(\by)=(1+x+\alpha_2y+ z)^q. 
    \end{equation}

    For $0<q\leq q_0$,
	we have
	\begin{equation}\label{LUP}
	\begin{aligned}[\op U^q](\by,k)=&[U^q]_s(\by)\big(D(k)(S_0-s)-\mu_1(k)sx\big)\\
		&+[U^q]_x(\by)\big(\mu_2(k)sx-D(k)x-h_2(k)xp\big)\\
		&+[U^q]_z(\by)\big(D(k)(P_0-p)-h_3(k)xp \big)\\ 
		&+ \frac12\left(\sigma_1^2(k)[U^q]_{ss}s^2+\sigma_2^2(k)[U^q]_{xx}x^2+\sigma_3^3(k)[U^q]_{pp}p^2\right)\\
		\leq &q(D(k)(S_0+ P_0-S(t)-  X(t)- P(t)))(1+s+\alpha_2x+ p)^{q-1}\\&+\frac{q(q-1)}2(1+s+\alpha_2x+ p)^{q-2}(\sigma_1^2s^2+\alpha_2^2\sigma_2^2x^2+ ^2\sigma_3^2p^2)
			\\
			\leq& q\left(D(k)(1+s+\alpha_2x+ p)^{q-1}-q\alpha_0(1+s+\alpha_2 x+p)^{q}\right)\\
			\leq& C_{q}-q\alpha_0 U^q(\by) \,\forall \by\in\R^3_+ \text{ for some constant } C_{q}>0.
		\end{aligned}
	\end{equation}
By Dynkin's formula, we have
	\begin{equation}
	\begin{aligned}
	\E_\by e^{\alpha_0 (t\wedge\bar\tau_n)}U^q(\BY(t\wedge\bar\tau_n))\leq& U^q(\by)+\E_\by\left(\int_0^{t\wedge\bar\tau_n}C_{q_0}e^{q\alpha_0s}ds\right)\\
	\leq& U^q(\by)+ C_{q_0}\int_0^te^{q\alpha_0s}ds\\
\leq& U^q(\by)+ \frac{C_{q_0}}{q\alpha_0}e^{q\alpha_0 t}
	\end{aligned}
\end{equation}
where the stopping time $\tau$ is defined by $\tau_n=\inf\{t\geq 0: U(\by)\geq n\}$.
Dividing both sides by $e^{q\alpha_0 t}$ we obtain \eqref{e1-thm1}.
Similary, with some elementary estimates, we have
$[\op U^2](\by,k)\leq \bar K\, U^2, \forall \by\in\R^3_+,k\in\M.$
Thus, by Dynkin's formula, \begin{equation}\label{e6-thm1}
	\begin{aligned}
	\E_\by e^{-\bar K t}U^2(\BY(t\wedge\bar\tau_n))\leq 	\E_\by e^{-\bar K (t\wedge\bar\tau_n)}U^2(\BY(t\wedge\bar\tau_n))\leq& U^2(\by).
	\end{aligned}
\end{equation}
Letting $n\to\infty$, we can derive \eqref{e2-thm1} from Lebesgue’s dominated convergence theorem. One can then obtrain \eqref{e3-thm1} from \eqref{e6-thm1}.
\end{proof}

Let $c_0$ be sufficiently large such that
\begin{equation}\label{c_0}
	\begin{aligned}
-2&d(k)[(s-\wdt s)^2+(p-\wdt p)^2]+ \sigma_1^2(k)(s-\wdt s)^2+\sigma_3^2(k)(p-\wdt p)^2+ 2(s-\wdt s)\mu_1(k)\wdt s\wdt x+2(p-\wdt p)h_2(k)\wdt p\wdt x\\
& \leq c_0\left(s-\wdt s)^2+(p-\wdt p)^2 + (\wdt s+ \wdt p)^2\wdt x^2\right) \text{ for all } s,\wdt s, p,\wdt p, \wdt x>0.
\end{aligned}
\end{equation}
Since in this subsection we look at extinction, we remind the reader that $\Lambda<0$.

Let $\gamma_0:=-\frac{\Lambda}3>0$ and pick $\lambda> \gamma_0 +\sigma_1^2\vee\sigma_2^2 +\frac{c_0}2$.
We will consider 
the following coupling process.
\begin{equation}\label{coup1}
	\begin{cases}
		d\BS(t)=& [D(\xi(t))(S_0-\bar S(t))]dt+\sigma_1(\xi(t)) \BS(t)dW_1(t)\\
		d\BP(t)=& [D(\xi(t))(P_0-\bar P(t))]dt+\sigma_2(\xi(t)) \BP(t)dW_3(t)\\
		d\wdt S(t)=&[D(\xi(t)(S_0-\wdt S(t))-\mu_1(\xi(t))\wdt S(t)\wdt X(t)]dt+\sigma_1(\xi(t))\wdt  S(t)dW_1(t)\\
		&+\lambda\1_{\{t<\wdt\tau_\delta\}}(\BS(t)-\wdt S(t))dt\\
		d\wdt P(t)=& [D(\xi(t)(P_0-\wdt P(t))-h_2\wdt X(t)\wdt P(t)]dt+\sigma_3\wdt  P(t)dW_3(t)\\
		&+\lambda\1_{\{t<\wdt\tau_\delta\}}(\BP(t)-\wdt P(t))dt\\
		d\wdt X(t)=&\wdt X(t) \left[\mu_2(\xi(t))\wdt S(t)-h_2(\xi(t))\wdt P(t)\right] dt+\sigma_2(\xi(t))\wdt  X(t)dW_2(t)\\
	\end{cases}
\end{equation}

This coupling will help us prove the extinction results by ensuring that we can control certain processes and keep them close on an infinite time horizon with high probability.

\begin{proof}[Proof of Lemma \ref{lm0}]
		Since $q_{ij}>0$ for $i\ne j$ and $q_{ii}=-\sum_{j\ne i} q_{ij}$, it is easy to see that 
		$\mathbf A$ is a strictly diagonally dominant matrix. As a result, $\mathbf A$ is nonsingular and $\mathbf A^{-1}$ exists.
	By It\^o's formula, we have
	\begin{equation}\label{e1-lm0}
		\begin{aligned}
	\E_{\bmu_b} \left[\eta(\xi(T))\BS(T)+\zeta(\xi(T))\BP(t)\right]=&
	\E_{\bmu_b} \left[\eta(\xi(0))\BS(0)+\zeta(\xi(0))\BP(0)\right]\\
	&+\E_{\bmu_b}\int_0^T D(\xi(t))[\eta(\xi(t))+\zeta(\xi(t))]dt\\
	&+\E_{\bmu_b}\int_0^T\left(-D(\xi(t))\eta(\xi(t))+\sum_{j\in\M}q_{\xi(t),j}\eta(j)\right)\BS(t)dt\\
		&+\E_{\bmu_b}\int_0^T\left(-D(\xi(t))\zeta(\xi(t))+\sum_{j\in\M}q_{\xi(t),j}\zeta(j)\right)\BP(t)dt\\
=&\E_{\bmu_b} \left[\eta(\xi(0))\BS(0)+\zeta(\xi(0))\BP(0)\right]\\
&+\E_{\bmu_b}\int_0^T D(\xi(t))[\eta(\xi(t))+\zeta(\xi(t))]dt\\
&+\E_{\bmu_b}\int_0^T \left[-\mu_2(\xi(t))\BS(t)+h_1(\xi(t))\BP(t)\right]dt		
	\end{aligned}
\end{equation}
where the last equality comes from the fact that $-D(k)\eta(k)+\sum_{j\in\M}q_{kj}\eta(k)=-\mu_2(k)$ and $-D(k)\zeta(k)+\sum_{j\in\M}q_{kj}\zeta(k)=h_1(k)$.
In the equation above the subscript of $\E_{\bmu_b}$ indicates that we condition on the initial distribution $\bmu_b$. This implies that $(\BS(t),\BP(t),\xi(t))$ has distribution $\bmu_b$ for any $t\geq 0$ since $\bmu_b$ is an invariant probability measure.
As a result, 
\begin{equation}\label{e2-lm0}
\E_{\bmu_b} \left[\eta(\xi(T))\BS(T)+\zeta(\xi(T))\BP(t)\right]=\E_{\bmu_b} \left[\eta(\xi(0))\BS(0)+\zeta(\xi(0))\BP(0)\right],
\end{equation}
\begin{equation}\label{e3-lm0}
	\E_{\bmu_b}\int_0^T \left[-\mu_2(\xi(t))\BS(t)+h_1(\xi(t))\BP(t)\right]dt	
	=T\sum_{k\in\M}\int_{\R^2_+}\left(\eta(k)s+\zeta(k)p\right)\bmu_b(dsdp)
\end{equation}
and
\begin{equation}\label{e4-lm0}
	\begin{aligned}
\E_{\bmu_b}\int_0^T D(\xi(t))[\eta(\xi(t))+\zeta(\xi(t))]dt
=&T\sum_{k\in\M}\int_{\R^2_+}\left(D(k)(\xi(k)+\zeta(k))\right)\bmu_b(dsdp)\\
=&T\sum_{k\in\M}(D(k)(\xi(k)+\zeta(k))\pi_k
\end{aligned}
\end{equation}
where the last equality is due to the fact that the process $\xi(t)$ has a unique invariant probability measure $\pi$.
Plugging \eqref{e2-lm0}, \eqref{e3-lm0} and \eqref{e4-lm0} into \eqref{e1-lm0}, we have
$$
\sum_{k\in\M}\int_{\R^2_+}\left(\eta(k)s+\zeta(k)p\right)\bmu_b(dsdp)=\sum_{k\in\M}(D(k)(\xi(k)+\zeta(k))\pi_k.
$$
Thus,
$$
\Lambda=\sum_{k\in\M}\int_{\R^2_+}\left(\eta(k)s+\zeta(k)p\right)\bmu_b(dsdp)-\sum_{k\in\M}\frac{\sigma_2^2(k)}2=\sum_{k\in\M}\left[D(k)(\xi(k)+\zeta(k))-\frac{\sigma_2^2(k)}2\right]\pi_k.
$$
\end{proof}
\begin{lm}\label{lm1}
	For any $\delta>0$, $s,p\geq0$, $\eps\in(0,1)$, there exists $M_0=M_0(\eps, \delta, s,p)$ such that
	$$\PP_{s,p,k}\left\{\BS(t)+\BP(t)+\BS^{-1}(t)+\BP^{-1}(t)\leq M_0e^{\delta t}, t\geq 0\right\}\geq 1-\eps.$$
\end{lm}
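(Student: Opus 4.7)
The processes $\BS$ and $\BP$ in the coupling \eqref{coup1} decouple from $(\wdt S,\wdt X,\wdt P)$, and each solves a scalar linear SDE with switching (Ornstein--Uhlenbeck drift with multiplicative noise), so variation of constants gives the explicit representation
\[
\BS(t)=\Phi_S(t)\Bigl[s+S_0\int_0^t\Phi_S(u)^{-1}D(\xi(u))\,du\Bigr],\qquad \Phi_S(t):=\exp\!\Bigl(-\!\int_0^t\bigl[D(\xi)+\tfrac{\sigma_1^2(\xi)}{2}\bigr]dv+M_1(t)\Bigr),
\]
where $M_1(t):=\int_0^t\sigma_1(\xi(u))\,dW_1(u)$, and an entirely analogous formula for $\BP$ with $\sigma_3,P_0,M_3$. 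Since all switching coefficients are bounded, controlling $\BS,\BP,\BS^{-1},\BP^{-1}$ uniformly on $[0,\infty)$ reduces to controlling the two continuous martingales $M_1,M_3$ on the whole half-line.

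For that I would apply Ville's inequality to the exponential supermartingale $\exp(\lambda M_j-\tfrac{\lambda^2}{2}\langle M_j\rangle)$. Using $\langle M_j\rangle_t\leq\bar\sigma_j^2\,t$ with $\bar\sigma_j:=\max_k\sigma_j(k)$ and choosing $\lambda=\delta/\bar\sigma_j^2$ yields
\[
\PP\Bigl\{\sup_{t\geq 0}\bigl(\pm M_j(t)-\tfrac{\delta}{2}\,t\bigr)\geq K\Bigr\}\leq e^{-\delta K/\bar\sigma_j^2},\qquad j\in\{1,3\}.
\]
Choosing $K=K(\eps,\delta)$ large makes the four exceptional probabilities (two signs, two martingales) sum to at most $\eps$; on the resulting good event $\Omega_0$ one has $|M_1(t)|,\,|M_3(t)|\leq K+\tfrac{\delta}{2}t$ for all $t\geq 0$.

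On $\Omega_0$ one bounds $\Phi_S(t)\Phi_S(u)^{-1}\leq e^{2K}e^{-\underline D(t-u)+\delta t}$ (with $\underline D:=\min_k D(k)$), and integrating yields $\BS(t)\leq C(s,\eps,\delta)\,e^{\delta t}$ for all $t\geq 0$; the same bound holds for $\BP$. For the lower bound, drop $\Phi_S(t)s$ and restrict the integral to $u\in[\max(0,t-1),t]$: on that window the deterministic part of the exponent stays bounded by $\bar b:=\max_k(D(k)+\sigma_1^2(k)/2)$ in absolute value and, on $\Omega_0$, $M_1(t)-M_1(u)\geq -2K-\delta t$, producing $\BS(t)\geq c\,e^{-\delta t}$ for $t\geq 1$ and hence $\BS(t)^{-1}\leq c^{-1}e^{\delta t}$; for $t\in[0,1]$ one instead falls back on $\BS(t)\geq \Phi_S(t)s\geq s\,e^{-\bar b-K}$ on $\Omega_0$, giving a purely deterministic bound (this is where $M_0$ must depend on $s$, so the lemma implicitly requires $s,p>0$). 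Summing the four estimates produces $M_0=M_0(\eps,\delta,s,p)$. The main technical care is to keep the final rate honestly at $\delta$ rather than at a larger multiple: this is why one allots the martingale tail the smaller linear budget $\delta/2$, so that the doubling $|M_j(t)-M_j(u)|\leq 2K+\delta t$ does not blow up the exponent.
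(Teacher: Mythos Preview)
Your argument is correct and takes a genuinely different route from the paper's proof. The paper uses a Lyapunov-function approach: it sets $\Phi(s)=s+s^{-1}$, derives the drift inequality $\op\Phi\leq C_\Phi-D_m\Phi$ via It\^o's formula, applies Dynkin's formula to obtain the uniform moment bound $\E_{s,k}\Phi(\BS(t))\leq\Phi(s)+C_\Phi/D_m$, and then uses Markov's inequality on each unit interval $[n,n+1]$ together with the Markov property and a geometric-series summation over $n$ to produce the pathwise envelope $M_0e^{\delta t}$. Your approach instead exploits the linearity of the scalar SDEs to write down the explicit variation-of-constants solution and then controls the martingales $M_1,M_3$ uniformly on $[0,\infty)$ via the exponential martingale (Ville) inequality, reading off the upper and lower bounds on $\BS,\BP$ directly from the formula. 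The paper's method is more robust---it needs only a drift condition, not an explicit solution, so it transfers to nonlinear equations---whereas yours is more transparent and yields explicit constants, at the cost of being tied to the linear structure. Your observation that the statement implicitly needs $s,p>0$ (otherwise $\BS^{-1}(0)=\infty$) is correct; the paper's own proof silently assumes this as well.
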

\begin{proof}
	Consider the Markov process $(\BS(t),\xi(t))$ where $\BS(t)$ satisfying	$$d\BS(t)= [D(\xi(t))(S_0-S(t))]dt+\sigma_1(\xi(t)) \BS(t)dW_1(t),\BS(0)=s>0.$$
Let $\Phi(s)=s+s^{-1}$.
By It\^o's formula, and a standard algebraic inequality, we have
$$
\begin{aligned}
	d\Phi(\BS(t))=&\left[D(\xi(t))(S_0-\BS(t))-D(\xi(t))\frac{(S_0-\BS(t))}{\BS^2(t)}+\sigma_1^2(\xi(t))\frac{1}{\BS(t)}\right]dt\\
	&+\sigma_1(\xi(t))\BS(t)dW_1(t)-\sigma_1(\xi(t))\frac{1}{\BS(t)}dW_1(t)\\
	\leq& [C_\Phi - D_m\Phi(\BS(t))]dt+\sigma_1(\xi(t))\BS(t)dW_1(t)-\sigma_1(\xi(t))\frac{1}{\BS(t)}dW_1(t)
\end{aligned}
$$	
where $D_m=\min\{D(k): k\in\M\}$ and $C_\phi$ is a sufficiently large constant.
Applying Dynkin's formula, we deduce that
\begin{equation}\label{eD_m}
	\E_{s,k}e^{D_m(\bar\tau_{L}\wedge t)}\Phi(\BS(\bar\tau_{L}\wedge t))\leq \Phi(s) + \E_{s,k} \int_0^{\bar\tau_{L}\wedge t} C_\Phi e^{D_m s}ds\leq \Phi(s)+\frac{C_\Phi e^{D_m t}}{D_m},
	\end{equation}
where $\bar\tau_{L}:=\inf\{t\geq0: \Phi(\BS(t))\geq L\}.$
Letting $L\to\infty$ and then diving both sides of \eqref{eD_m} by $e^{D_mt}$ we have
\begin{equation}\label{ePhi1}
	\E_{s,k}\Phi(\BS(\bar\tau_{L}\wedge t))\leq \Phi(s) + \frac{C_\Phi }{D_m}, t\geq 0
\end{equation}

On the other hand, for any $ L >0$, applying Markov's inequality  we have from \eqref{eD_m} that
$$
\PP\left\{\sup_{t\in[0,1]}\bar \Phi(S(t))\geq  L \right\}\leq \frac1{ L }\E_{s,k} \Phi(\BS(\bar\tau_{ L }\wedge 1))\leq  \frac1L\left(\Phi(s)+\frac{C_\Phi e^{D_m}}{D_m}\right)
$$
By the Markov property of $(\BX,\xi(t))$ and by applying the inequality above for $L=M_{01} e^{\delta_n}$ we have
$$
\PP\left\{\sup_{t\in[n,n+1]}\bar V(X(t),Y(t))> M_{01} e^{\delta n}\right\}\leq \frac{1}{M_{01} e^{\delta n}}\left(\E_{s,k} \Phi(\BS(n))+\frac{C_\Phi e^{D_m}}{D_m}\right)\leq \frac{1}{M_{01} e^{\delta n}}\left(\Phi(s)+\frac{C_\Phi }{D_m}+\frac{C_\Phi e^{D_m}}{D_m}\right)
$$
where the last inequality is due to \eqref{ePhi1}.
As a result, 
\begin{equation}\label{e5-lm1}
	\PP\left\{\sup_{t\in[n,n+1]}\bar V(X(t),Y(t))\leq M_{01} e^{\delta\theta n}, \text{ for all } n\in\N\right\}>1-\left(\Phi(s)+\frac{C_\Phi }{D_m}+\frac{C_\Phi e^{D_m}}{D_m}\right)\sum_{n=1}^\infty\frac{1}{M_{01} e^{\delta n}}
\end{equation}
For $\eps>0$, pick $ M_{01}=M_{01}(s,\eps,\delta)$ such that $\left(\Phi(s)+\frac{C_\Phi }{D_m}+\frac{C_\Phi e^{D_m}}{D_m}\right)\sum_{n=1}^\infty\frac{1}{M_{01} e^{\delta n}}\leq\frac\eps2$, we have
	$$\PP_{s,k}\left\{\BS(t)+\BS^{-1}(t)\leq M_{01}e^{\delta t}, t\geq 0\right\}\geq 1-\frac\eps2 \text{ for all } \eps\in(0,1).$$
	Similarly, we can find $M_{02}$ such that
		$$\PP_{p,k}\left\{\BP(t)+\BP^{-1}(t)\leq M_{02}e^{\delta t}, t\geq 0\right\}\geq 1-\frac\eps2 \text{ for all } \eps\in(0,1).$$
Combining two displays above we can complete the proof.
\end{proof}

\begin{prop}\label{prop2.1}
	We have that
	\begin{equation}\label{e1-prop2.1}
		\E \sup_{0\leq t\leq\wdt\tau_\delta}e^{\gamma_0 t}[(\BS(t)-\wdt S(t))^2+(\BP(t)-\wdt P(t))^2] \leq \wdt C((\bar x-\wdt x)^2+(\bar y-\wdt y)^2+\delta^2)
	\end{equation}
where 
$$\wdt\tau_\delta=\inf\left\{t\geq0: (\wdt S(t)+ \wdt P(t))\wdt X(t)\geq\delta e^{-\gamma_0 t}\right\}.$$
Moreover, there are $\wdt M_{\eps, x,y},\wdt m_{\eps,x,y}>0$ such as
	\begin{equation}\label{e2-prop2.1}
\PP_{x,y,\wdt\by}\left\{\int_0^{\wdt\tau_\delta}|v_1(t)|^2+|v_2(t)|^2)dt\geq \wdt M_{\eps, x,y}((s-\wdt s)^2+(p-\wdt p)^2+\delta^2) \right\}\leq \eps
	\end{equation}
as long as $(s-\wdt s)^2+(p-\wdt p)^2+\delta^2\leq \wdt m_{\eps,x,y}$
where
$$v_1(t)=\frac{\lambda(\BS(t)-\wdt S(t))}{\sigma_1(\xi(t))\wdt S(t)}\text{ and }
v_3(t)=\frac{\lambda(\BP(t)-\wdt P(t))}{\sigma_3(\xi(t))\wdt P(t)}.$$
\end{prop}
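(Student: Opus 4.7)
For the first bound, I would set $U(t):=(\bar S(t)-\wdt S(t))^2+(\bar P(t)-\wdt P(t))^2$ and compute $dU$ using the coupled system \eqref{coup1}. The drift of each of $\bar S - \wdt S$ and $\bar P-\wdt P$ receives a dissipative $-\lambda \mathbf 1_{\{t<\wdt\tau_\delta\}}$ term from the feedback, plus the nonlinear terms $\mu_1(\xi(t))\wdt S\wdt X$ and $h_2(\xi(t))\wdt P\wdt X$ coming from the missing interaction. After It\^o's formula, before $\wdt\tau_\delta$, the drift of $U$ is bounded above by
\[
\big[-2D(\xi)+\sigma_1^2(\xi)\big](\bar S-\wdt S)^2+\big[-2D(\xi)+\sigma_3^2(\xi)\big](\bar P-\wdt P)^2+2(\bar S-\wdt S)\mu_1\wdt S\wdt X+2(\bar P-\wdt P)h_2\wdt P\wdt X-2\lambda U,
\]
which by the defining inequality \eqref{c_0} of $c_0$ is $\leq (c_0-2\lambda)U+c_0(\wdt S+\wdt P)^2\wdt X^2$. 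Applying the product rule to $e^{\gamma_0 t}U(t)$ and using $\lambda>\gamma_0+\sigma_1^2\vee\sigma_2^2+c_0/2$ forces $\gamma_0+c_0-2\lambda\leq -\gamma_0$; on the other hand, the definition of $\wdt\tau_\delta$ gives $(\wdt S+\wdt P)^2\wdt X^2\leq \delta^2 e^{-2\gamma_0 t}$ up to the stopping time, so
\[
d\big(e^{\gamma_0 t}U(t)\big)\leq -\gamma_0 e^{\gamma_0 t}U(t)\,dt+c_0\delta^2 e^{-\gamma_0 t}\,dt+dM_t
\]
for a local martingale $M$ with $d\langle M\rangle_t\leq C U^2 e^{2\gamma_0 t}dt$.

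Integrating and taking expectations gives $\E[e^{\gamma_0(t\wedge\wdt\tau_\delta)}U(t\wedge\wdt\tau_\delta)]\leq U(0)+c_0\delta^2/\gamma_0$ together with the tail control $\gamma_0\E\int_0^{\wdt\tau_\delta}e^{\gamma_0 s}U(s)\,ds\leq U(0)+c_0\delta^2/\gamma_0$. To upgrade to \eqref{e1-prop2.1} I would localize, apply Burkholder-Davis-Gundy to $\E\sup_{t\leq T\wedge\wdt\tau_\delta}|M_t|$, write $\E[\langle M\rangle_{T\wedge\wdt\tau_\delta}^{1/2}]\leq C\,\E\big[(\sup e^{\gamma_0 t}U)^{1/2}(\int e^{\gamma_0 t}U\,dt)^{1/2}\big]$, and use Young's inequality to absorb half of $\E\sup e^{\gamma_0 t}U$ into the left-hand side. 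The surviving integral is bounded by the uniform estimate above; letting $T\to\infty$ and noting $U(0)=(s-\wdt s)^2+(p-\wdt p)^2$ yields \eqref{e1-prop2.1}.

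For \eqref{e2-prop2.1} I would bound $v_1^2+v_3^2$ pointwise by $C[(\bar S-\wdt S)^2/\wdt S^2+(\bar P-\wdt P)^2/\wdt P^2]$ and control $\wdt S,\wdt P$ from below through $\bar S,\bar P$. Fix a small $\delta'<\gamma_0/2$; Lemma \ref{lm1} provides a constant $M_0=M_0(\eps,\delta',s,p)$ such that $\bar S^{-1}+\bar P^{-1}\leq M_0 e^{\delta' t}$ for all $t\geq 0$ with probability $\geq 1-\eps/2$. On this event, $(\bar S-\wdt S)^2\leq e^{-\gamma_0 t}\sup_t e^{\gamma_0 t}(\bar S-\wdt S)^2$ so that $(\bar S-\wdt S)^2/\bar S^2\leq M_0^2\sup_t e^{\gamma_0 t}(\bar S-\wdt S)^2$; by Markov's inequality applied to \eqref{e1-prop2.1}, the event $\{\sup_t e^{\gamma_0 t}U(t)\leq (4M_0^2)^{-1}\}$ has probability $\geq 1-\eps/4$ provided $(s-\wdt s)^2+(p-\wdt p)^2+\delta^2\leq\wdt m_{\eps,x,y}$ for a sufficiently small $\wdt m_{\eps,x,y}$. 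On the intersection of these events $\wdt S\geq \bar S/2$, $\wdt P\geq\bar P/2$, so $1/\wdt S^2\leq 4M_0^2 e^{2\delta' t}$, giving
\[
\int_0^{\wdt\tau_\delta}\!(v_1^2+v_3^2)\,dt\leq C M_0^2\!\int_0^{\wdt\tau_\delta}\! e^{(2\delta'-\gamma_0)t}\big(e^{\gamma_0 t}U(t)\big)\,dt\leq \frac{CM_0^2}{\gamma_0-2\delta'}\sup_{t\leq\wdt\tau_\delta}e^{\gamma_0 t}U(t).
\]
A final Markov inequality combined with \eqref{e1-prop2.1} produces $\wdt M_{\eps,x,y}$ making the remaining bad event have probability at most $\eps/2$, and summing the three exceptional sets proves \eqref{e2-prop2.1}.

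\textbf{Main obstacle.} The delicate point is the interplay between the growth of $1/\wdt S$, $1/\wdt P$ (which could blow up) and the exponential decay of $U$. The argument only closes because we may choose $\delta'<\gamma_0/2$ in Lemma \ref{lm1}, so that the polynomial growth of $M_0 e^{\delta' t}$ is beaten by the factor $e^{-\gamma_0 t}$ inherent in \eqref{e1-prop2.1}; keeping track of the order in which $\eps$, $\delta'$, $M_0$ and $\wdt m_{\eps,x,y}$ are chosen, and ensuring the coupling feedback strength $\lambda$ is large enough to dominate both $c_0$ and the quadratic-variation contribution in the BDG step, is where the estimates are tightest.
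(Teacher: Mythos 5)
Your proposal is correct and follows the same overall strategy as the paper: use the coupling \eqref{coup1} and the defining inequality \eqref{c_0} for $c_0$ to get a dissipative drift bound for $(\BS(t)-\wdt S(t))^2+(\BP(t)-\wdt P(t))^2$ before $\wdt\tau_\delta$, weight by an exponential, control the martingale part by Burkholder--Davis--Gundy to obtain \eqref{e1-prop2.1}, and then combine Lemma \ref{lm1} with Markov's inequality applied to \eqref{e1-prop2.1} to get a lower bound of the type $\wdt S\geq \BS/2$, $\wdt P\geq\BP/2$ on a high-probability event, which yields \eqref{e2-prop2.1} under the smallness condition on $(s-\wdt s)^2+(p-\wdt p)^2+\delta^2$. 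Where you genuinely differ is the technical route to the supremum estimate: the paper first proves the auxiliary fourth-moment bound \eqref{e3-prop2.1} by applying It\^o's formula to $e^{4\gamma_0 t}[(\BS-\wdt S)^2+(\BP-\wdt P)^2]^2$ and then bounds the stochastic integral via BDG and Jensen, whereas you keep the weight $e^{\gamma_0 t}$ throughout, retain the negative drift to get an integrated second-moment bound, and close the BDG step with the pathwise estimate $\langle M\rangle^{1/2}\leq(\sup_t e^{\gamma_0 t}U(t))^{1/2}(\int e^{\gamma_0 t}U(t)\,dt)^{1/2}$ followed by Young absorption of $\tfrac12\E\sup$ into the left-hand side (with the localization you correctly flag as necessary before subtracting). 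Your version buys a cleaner treatment of the forcing term: with weight $e^{\gamma_0 t}$ the definition of $\wdt\tau_\delta$ gives $c_0\delta^2 e^{-\gamma_0 t}$, integrable on $[0,\infty)$, so the $\delta^2$ contribution appears directly, while the paper's weight $e^{2\gamma_0 t}$ makes the analogous bound \eqref{e6-prop2.1} require a more careful matching of the weight with the decay built into $\wdt\tau_\delta$; the price is the absorption argument in place of the paper's one-shot fourth-moment estimate. Your handling of \eqref{e2-prop2.1}, choosing $\delta'<\gamma_0/2$ in Lemma \ref{lm1} so that the growth $M_0e^{\delta' t}$ of $\BS^{-1}+\BP^{-1}$ is beaten by the decay $e^{-\gamma_0 t/2}$ of the coupling error, is in substance the same as the paper's argument with the events $\wdt\Omega_3$, $\wdt\Omega_4$, and your bookkeeping of $\eps$, $M_0$ and $\wdt m_{\eps,x,y}$ is consistent.
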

\begin{proof}

Using \eqref{c_0} we get
\begin{equation}\label{e01-prop2.1}
	\begin{aligned}
	d[(\BS(t)-\wdt S(t))^2&+(\BP(t)-\wdt P(t))^2]\\
	=\Big[&-2D(\xi(t))[(\BS(t)-\wdt S(t))^2+(\BP(t)-\wdt P(t))^2\\
&	+\sigma_1^2(\xi(t))(\BS(t)-\wdt S(t))^2+\sigma_3^2(\xi(t))(\BP(t)-\wdt P(t))^2\\
&+	2(\BS(t)-\wdt S(t))\mu_1(\xi(t))\wdt S(t)\wdt X(t)+2(\BP(t)-\wdt P(t))h_2\wdt P(t)\wdt X(t)\\	
&-2\gamma[(\BS(t)-\wdt S(t))^2+(\BP(t)-\wdt P(t))^2\Big]dt\\
&+2\sigma_1(\xi(t))(\BS(t)-\wdt S(t))^2dW_1(t)+2\sigma_3(\xi(t))(\BP(t)-\wdt P(t))^2dW_3(t)\\
\leq\quad & (-2\gamma+c_0)[(\BS(t)-\wdt S(t))^2+(\BP(t)-\wdt P(t))^2]dt\\
&+ c_0(\wdt S(t)+\wdt P(t))^2 \wdt X^2(t)dt\\
&+2\sigma_1(\xi(t))(\BS(t)-\wdt S(t))^2dW_1(t)+2\sigma_3(\xi(t))(\BP(t)-\wdt P(t))^2dW_3(t)\\
	\end{aligned}
\end{equation}
By It\^o's formula and Cauchy's inequality, we have
\begin{equation}
	\begin{aligned}
d&e^{2\gamma_0 t}[(\BS(t)-\wdt S(t))^2+(\BP(t)-\wdt P(t))^2]^2\\&\leq -(4\gamma-4\gamma_0+4c_0+4\check\sigma^2)e^{4\gamma_0 t}[(\BS(t)-\wdt S(t))^2+(\BP(t)-\wdt P(t))^2]+ c_0e^{4\gamma_0 t}(\wdt S(t)+\wdt P(t))^4|\wdt Z(t)|^4dt\\
&\qquad+ 4e^{4\gamma_0 t}[(\BS(t)-\wdt S(t))^2+(\BP(t)-\wdt P(t))^2]^2\left(\sigma_1(\xi)(\BS(t)-\wdt S(t))^2dW_1(t)+\sigma_2(\xi)(\BP(t)-\wdt P(t))^2dW_2(t)\right).
	\end{aligned}
\end{equation}
where $\check\sigma=\max\{|\sigma_1(k)|,|\sigma_2(k)|,k\in\M\}$.
Let $\hat\tau_n=\wdt\tau_\delta\wedge \inf\{n\geq 0: (\BS(t)-\wdt S(t))^2+(\BP(t)-\wdt P(t))^2\geq n\}$, we have from Dynkin's formula that
\begin{align*}
\E_{s,p,\wdt \by,k}&e^{4\gamma_0 t}[(\BS(t)-\wdt S(t))^2+(\BP(t\wedge\hat\tau_n)-\wdt P(\wedge\hat\tau_n))^2]^2
+\E_{s,p,\wdt \by,k} \int_0^{t\wedge\wdt\tau_\delta} e^{4\gamma_0 s}[(\BS(s)-\wdt S(s))^2+(\BP(s)-\wdt P(s))^2]^2\\
\leq&
\E \int_0^{t\wedge\hat\tau_n}c_0e^{4\gamma_0 s}(\wdt S(t)+\wdt P(t))^4|\wdt Z(s)|^4ds
+ ((s-\wdt s)^2+(p-\wdt p)^2)^2,
\end{align*}
Letting $n\to\infty$ we have from Lebesque's dominated theorem that
\begin{align*}
(4\gamma-4\gamma_0+2c_0+4\check\sigma^2)&\E \int_0^{t\wedge\wdt\tau_\delta} e^{4\gamma_0 s}[(\BS(s)-\wdt S(s))^2+(\BP(s)-\wdt P(s))^2]^2\\
\leq&
\E \int_0^{t\wedge\wdt\tau_\delta}c_0e^{4\gamma_0 s}(\wdt S(t)+\wdt P(t))^4|\wdt X(s)|^4ds
+ ((\bar x-\wdt x)^2+(\bar y-\wdt y)^2)^2.
\end{align*}
Consequently, we have from the definition of $\wdt\tau_\delta$ that
\begin{equation}\label{e3-prop2.1}
\E \int_0^{t\wedge\wdt\tau_\delta} e^{4\gamma_0 s}[(\BS(s)-\wdt S(s))^2+(\BP(s)-\wdt P(s))^2]^2\leq C((\bar x-\wdt x)^4+(\bar y-\wdt y)^4+\delta^4)\end{equation}
Here and thereafter, $C$ is a generic constant independent of $|\bar x-\wdt x|, |\bar y-\wdt y|$ and $\delta$ and $C$ can be different in different lines.
Now, applying It\^o's fomula to \eqref{e01-prop2.1}, we have
\begin{equation}
	\begin{aligned}
		d&e^{2\gamma_0 t}[(\BS(t)-\wdt S(t))^2+(\BP(t)-\wdt P(t))^2]\\&\leq -(\lambda-2\gamma_0-c_0)e^{2\gamma_0 t}[(\BS(t)-\wdt S(t))^2+(\BP(t)-\wdt P(t))^2]+ c_0e^{2\gamma_0 t}(\wdt S(t)+\wdt P(t))^2 \wdt X^2(t)dt\\
		&\qquad+ e^{2\gamma_0 t}\left(\sigma_1(\xi(t))(\BS(t)-\wdt S(t))^2dW_1(t)+\sigma_3(\xi(t))(\BP(t)-\wdt P(t))^2dW_2(t)\right).
	\end{aligned}
\end{equation}
Then taking expectation yields
\begin{equation}\label{e4-prop2.1}
\begin{aligned}
\E \sup_{t\leq T\wedge\wdt\tau_\delta}& e^{2\gamma_0 t}[(\BS(t)-\wdt S(t))^2+(\BP(t)-\wdt P(t))^2]\\
\leq &\E \int_0^{T\wedge\wdt\tau_\delta} c_0e^{2\gamma_0 s}(\wdt S(s)+\wdt P(s))^2 \wdt X^2(t)ds\\
&+\E \sup_{t\leq T\wedge\wdt\tau_\delta}\int_0^t\left\{e^{2\gamma_0 s}\left(\sigma_1(\xi(s))(\BS(s)-\wdt S(s))^2dW_1(s)+\sigma_2(\xi(s))(\BP(s)-\wdt P(s))^2dW_2(s)\right)\right\}
\end{aligned}
\end{equation}
In view of the Burkholder-Davis-Gundy inequality, we have
\begin{equation}\label{e5-prop2.1}
	\begin{aligned}
		\E \sup_{t\leq T\wedge\wdt\tau_\delta}&\int_0^t\left\{e^{2\gamma_0 s}\left(\sigma_1(\BS(s)-\wdt S(s))^2dW_1(s)+\sigma_2(\BP(s)-\wdt P(s))^2dW_2(s)\right)\right\}\\
		\leq& C\left[\E \int_0^{t\wedge\wdt\tau_\delta} e^{4\gamma_0 s}[(\BS(s)-\wdt S(s))^2+(\BP(s)-\wdt P(s))^2]^2\right]^{\frac12}\\
		\leq& C((\bar x-\wdt x)^2+(\bar y-\wdt y)^2+\delta^2) \text{ (due to \eqref{e3-prop2.1})}
	\end{aligned}
\end{equation}
and clearly from the definition of $\wdt\tau_\delta$,
\begin{equation}\label{e6-prop2.1}
\E \int_0^{T\wedge\wdt\tau_\delta} c_0e^{2\gamma_0 s}(\wdt S(s)+\wdt P(s))^2\wdt X^2(s)ds\leq C\delta^2.
\end{equation}

Applying \eqref{e5-prop2.1} and \eqref{e6-prop2.1} to \eqref{e4-prop2.1}, we can easily obtain \eqref{e1-prop2.1}.

Next, let
$$v_1(t)=\frac{\lambda(\BS(t)-\wdt S(t))}{\sigma_1(\xi(t))\wdt S(t)}$$
and
$$v_3(t)=\frac{\lambda(\BP(t)-\wdt P(t))}{\sigma_3(\xi(t))\wdt P(t)}.$$ In view of Lemma \ref{lm1},
\begin{equation}\label{e9-prop2.1}
\PP_{x,y}\left(\wdt\Omega_3:=\left\{[\BS(t)^{-1}+\BP^{-1}(t)]^2\leq M_{\eps,s, p, 1}e^{\frac{\gamma_0 t}2}, \text{ for all } t\geq 0\right\}\right)\geq 1-\frac{\eps}2.
\end{equation}
By virtue of \eqref{e1-prop2.1}, there is $\wdt C_0$ independent of $(\bar x-\wdt x)^2+(\bar y-\wdt y)^2+\delta^2$ such that
$\PP_{x,y,\wdt\by}(\wdt\Omega_4)\geq 1-\frac{\eps}2$ where
\begin{equation}\label{e10-prop2.1}
\wdt\Omega_4:=\left\{e^{2\gamma_0 t}[(\BS(t)-\wdt S(t))^2+(\BP(t)-\wdt P(t))^2] \leq \frac{\wdt C_0((\bar x-\wdt x)^2+(\bar y-\wdt y)^2+\delta^2)}\eps \text{ for all } 0\leq t\leq \wdt\tau_\delta \right\}.
\end{equation}

For $t\leq\tau_\delta$, if $\BS(t)\geq M_{\eps,s, p, 1}^{-1} e^{-\frac{\gamma_0 t}2}$ and
$(\BS(t)-\wdt S(t))\leq \frac12M_{\eps,s, p, 1}^{-1} e^{-\frac{\gamma_0 t}2}$, we have
\begin{equation}\label{e6a-prop2.1}
	\frac1{\wdt S(t)}\leq \frac1{\BS(t)+(\BS(t)-\wdt S(t))}\leq \frac{1}{M_{\eps,s, p, 1}^{-1} e^{-\frac{\gamma_0 t}2}+(\BS(t)-\wdt S(t))}\leq 2 M_{\eps,s, p, 1} e^{\frac{\gamma_0 t}2}
\end{equation}

Analogously,
\begin{equation}\label{e6b-prop2.1}
	\frac1{\wdt P(t)}\leq 2 M_{\eps,s, p, 1} e^{\frac{\gamma_0 t}2}
	\text{ when }\BS(t)\geq M_{\eps,s, p, 1}^{-1} e^{-\frac{\gamma_0 t}2} \text{ and
	} (\BS(t)-\wdt S(t))\leq \frac12M_{\eps,s, p, 1}^{-1} e^{-\frac{\gamma_0 t}2}.
\end{equation}

If $(\bar x-\wdt x)^2+(\bar y-\wdt y)^2+\delta^2)\leq \frac{\eps}{4\wdt C_0M^2_{\eps,x,y, 1}}$
and $\omega\in\wdt\Omega_3$ then
$$(\BS(t)-\wdt S(t))\vee (\BP(t)-\wdt P(t))\leq  \left(\frac{\wdt C_0((\bar x-\wdt x)^2+(\bar y-\wdt y)^2+\delta^2)}\eps\right)^{-\frac12}  e^{-\frac{\gamma_0 t}2}\leq \frac12M_{\eps,s, p, 1}^{-1} e^{-\frac{\gamma_0 t}2}.$$

This together with \eqref{e6a-prop2.1} and \eqref{e6b-prop2.1} implies that
\begin{equation}\label{e6c-prop2.1}
	\frac1{\wdt S(t)}\vee	\frac1{\wdt P(t)}\leq 2M_{\eps,s, p, 1}e^{\frac{\gamma_0 t}2}
\end{equation}
for $\omega\in\wdt\Omega_3\bigcap\wdt\Omega_4$ and $(\bar x-\wdt x)^2+(\bar y-\wdt y)^2+\delta^2)\leq \frac{\eps}{2\wdt C_0M_{\eps,x,y, 1}}$.

Note that,
$$|v_1(t)|^2+|v_2(t)|^2\leq \frac{2\lambda^2}{\hat\sigma^2}\left(\wdt S^{-2}(t)\wedge\wdt P^{-2}(t)\right)[(\BS(t)-\wdt S(t))^2+(\BP(t)-\wdt P(t))^2]$$
where $\hat\sigma=\min\{|\sigma_1(k)|,|\sigma_3(k)|,k\in\M\}$.
Putting \eqref{e9-prop2.1} and \eqref{e10-prop2.1} and \eqref{e6c-prop2.1} together, we have
$$
\PP\left\{|v_1(t)|^2+|v_2(t)|^2\leq \frac{8\lambda^2 M^2_{\eps,s,p,1}}{\hat\sigma^2}\frac{\wdt C_0((\bar x-\wdt x)^2+(\bar y-\wdt y)^2+\delta^2)}\eps e^{-\gamma_0 t/2}\text{ for all } 0\leq t\leq \wdt\tau_\delta\right\}\geq 1-\eps,
$$
when $(\bar x-\wdt x)^2+(\bar y-\wdt y)^2+\delta^2)\leq \frac{\eps}{2\wdt C_0M_{\eps,s,p, 1}}=:\wdt m_{\eps,s,p}$. Now we can see that \eqref{e2-prop2.1} also follows easily.
\end{proof}
\begin{prop}\label{lm2}
	For any $(s,p)\in\R^{2,\circ}_+$ and $\eps\in(0,1)$, there exists $\varsigma=\varsigma(s,p,\eps)>0$ such that
	$$
	\PP_{\wdt\by}\left\{\lim_{t\to\infty}\frac{\ln \tilde X(t)}t=\Lambda<0\right\}>1-\eps
	$$
	if $(\wdt s-s)^2+(\wdt p-p)^2+\wdt x^2\leq\varsigma^2$.
\end{prop}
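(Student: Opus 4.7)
The plan is to apply It\^o's formula to $\ln\wdt X(t)$ along the coupling \eqref{coup1}, show that on an event of probability at least $1-\eps$ the drift average converges to $\Lambda$ while the martingale average vanishes, and simultaneously prove that on the same event the stopping time $\wdt\tau_\delta$ is in fact infinite. It\^o gives
\begin{equation*}
\ln\wdt X(t\wedge\wdt\tau_\delta)=\ln\wdt x+\int_0^{t\wedge\wdt\tau_\delta}\Bigl[\mu_2(\xi(s))\wdt S(s)-h_2(\xi(s))\wdt P(s)-\tfrac{\sigma_2^2(\xi(s))}{2}\Bigr]ds+\int_0^{t\wedge\wdt\tau_\delta}\sigma_2(\xi(s))\,dW_2(s),
\end{equation*}
and the key step is to write the integrand as $[\mu_2(\xi)\BS-h_2(\xi)\BP-\sigma_2^2(\xi)/2]+[\mu_2(\xi)(\wdt S-\BS)-h_2(\xi)(\wdt P-\BP)]$, so that Corollary \ref{c:1} combined with the ergodic theorem handles the first part while Proposition \ref{prop2.1} controls the perturbation.

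I would construct four events whose intersection $\Omega$ has probability at least $1-\eps$. First, $\Omega_1$ on which the ergodic theorem for the Markov-Feller process $(\BS,\BP,\xi)$ with invariant measure $\bmu_b$ together with the strong law for the martingale $\int_0^{\cdot}\sigma_2(\xi(s))\,dW_2(s)$ yields, for some $T_0>0$ and all $t\geq T_0$,
\begin{equation*}
\left|\tfrac1t\int_0^t\bigl[\mu_2(\xi(s))\BS(s)-h_2(\xi(s))\BP(s)-\tfrac{\sigma_2^2(\xi(s))}{2}\bigr]\,ds-\Lambda\right|\leq\tfrac{\gamma_0}{3},\qquad\left|\tfrac1t\int_0^t\sigma_2(\xi(s))\,dW_2(s)\right|\leq\tfrac{\gamma_0}{3}.
\end{equation*}
Second, $\Omega_2$ from Proposition \ref{prop2.1} (with the choice $\delta=\varsigma$) and Markov's inequality, ensuring $|\wdt S(t)-\BS(t)|+|\wdt P(t)-\BP(t)|\leq C_\eps\varsigma\,e^{-\gamma_0 t/2}$ for all $0\leq t\leq\wdt\tau_\delta$. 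Third, $\Omega_3$ from Lemma \ref{lm1} with growth rate $\delta'<\gamma_0/4$, so that $\BS(t)+\BP(t)\leq M_0\,e^{\delta' t}$ for all $t\geq 0$. Fourth, $\Omega_4$ from Theorem \ref{thm1}\eqref{e3-thm1} giving $|\wdt\BY(t)|\leq K_{T_0}$ on $[0,T_0]$, together with a crude Gr\"onwall bound $\wdt X(t)\leq \wdt x\,e^{C T_0}$ on that interval. Each $\Omega_i$ is arranged to have probability at least $1-\eps/4$.

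On $\Omega$, plugging the $\Omega_1$ and $\Omega_2$ estimates into the It\^o identity (using that the perturbation integral is bounded by $\int_0^\infty C\varsigma e^{-\gamma_0 s/2}ds=O(\varsigma)$) gives, for all $T_0\leq t\leq\wdt\tau_\delta$,
\begin{equation*}
\ln\wdt X(t)\leq\ln\wdt x+\bigl(\Lambda+\tfrac{2\gamma_0}{3}\bigr)t+C\varsigma=\ln\wdt x-\tfrac{7\gamma_0}{3}t+C\varsigma,
\end{equation*}
so $\wdt X(t)\leq\wdt x\,e^{-7\gamma_0 t/3+C\varsigma}$. Combining with $\wdt S+\wdt P\leq M_0 e^{\delta' t}+C_\eps\varsigma$ yields
\begin{equation*}
(\wdt S(t)+\wdt P(t))\wdt X(t)\leq C'\wdt x\,e^{-(7\gamma_0/3-\delta')t}.
\end{equation*}
Because $7\gamma_0/3-\delta'>\gamma_0$, taking $\varsigma$ small makes this strictly less than $\delta e^{-\gamma_0 t}=\varsigma e^{-\gamma_0 t}$ on $[T_0,\infty)\cap[0,\wdt\tau_\delta]$. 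For $0\leq t\leq T_0$, the crude bound from $\Omega_4$ gives $(\wdt S+\wdt P)\wdt X\leq C''_{T_0}\wdt x\leq\varsigma e^{-\gamma_0 T_0}$ once $\varsigma$ is small. Hence $\wdt\tau_\delta=\infty$ on $\Omega$, and dividing the It\^o identity by $t$ then yields $\ln\wdt X(t)/t\to\Lambda$ on $\Omega$.

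The main obstacle is the apparent circularity: the coupling bounds of Proposition \ref{prop2.1} hold only on $[0,\wdt\tau_\delta]$, yet we need them on $[0,\infty)$ to force $\wdt\tau_\delta=\infty$. The resolution exploits the gap between the decay rate $7\gamma_0/3$ imposed on $\wdt X$ by the ergodic argument and the threshold rate $\gamma_0$ appearing in the definition of $\wdt\tau_\delta$: since $\wdt X$ decays strictly faster than the threshold $\delta e^{-\gamma_0 t}$ relaxes, and initial data are strictly below the threshold, the stopping time can never be reached provided $\varsigma$ is chosen small enough in terms of $T_0$ and the constants from Steps 2--4.
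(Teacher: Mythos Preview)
Your argument establishes that on a good event $\Omega$ (of $\PP$-probability at least $1-\eps$) the coupled process $\wdt X$ from \eqref{coup1} satisfies $\wdt\tau_\delta=\infty$ and $t^{-1}\ln\wdt X(t)\to\Lambda$. The bootstrap you describe (exploiting the rate gap between the decay $\approx e^{-7\gamma_0 t/3}$ and the threshold $\delta e^{-\gamma_0 t}$) is essentially what the paper does as well. But you have proved the wrong thing: the proposition asks for $\PP_{\wdt\by}\{\lim_t t^{-1}\ln X(t)=\Lambda\}$ where $X$ solves the \emph{original} system \eqref{main}. The coupled process $(\wdt S,\wdt X,\wdt P)$ is \emph{not} a solution of \eqref{main} under $\PP$, because the equations for $\wdt S$ and $\wdt P$ carry the artificial drift $\lambda\,\1_{\{t<\wdt\tau_\delta\}}(\BS-\wdt S)$ and $\lambda\,\1_{\{t<\wdt\tau_\delta\}}(\BP-\wdt P)$. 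That extra drift is precisely what makes Proposition~\ref{prop2.1} true; without it, in general there is no reason for $|S-\BS|^2+|P-\BP|^2$ to decay, since the linearized dynamics $-2D(\xi)+\sigma_1^2(\xi)$ need not be contracting (this is why the paper chooses $\lambda>\gamma_0+\sigma_1^2\vee\sigma_3^2+c_0/2$).

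The missing idea is a Girsanov change of measure. Setting $v_1=\lambda(\BS-\wdt S)/(\sigma_1\wdt S)$ and $v_3=\lambda(\BP-\wdt P)/(\sigma_3\wdt P)$, under the measure $\Q$ with density $\exp\bigl(-\int_0^{\wdt\tau_\delta}v_1\,dW_1+v_3\,dW_3-\tfrac12\int_0^{\wdt\tau_\delta}(v_1^2+v_3^2)\bigr)$ the artificial drift is absorbed into the Brownian motion and $(\wdt S,\wdt X,\wdt P)$ \emph{does} solve \eqref{main}. The second half of Proposition~\ref{prop2.1}, namely the bound \eqref{e2-prop2.1} on $\int_0^{\wdt\tau_\delta}(v_1^2+v_3^2)\,dt$ (which you do not invoke), is exactly what guarantees that $d\Q/d\PP$ is close to $1$ on $\Omega$, so that $\Q(\Omega)$ is still at least $1-C\eps$. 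Since $\PP_{\wdt\by}\{\cdot\}$ for the original system equals $\Q_{s,p,\wdt\by,k}\{\cdot\}$ on events measurable with respect to $(\wdt S,\wdt X,\wdt P)$, this closes the gap. Your proposal correctly identifies the circularity issue and its resolution, but does not address this second, more serious, obstacle.
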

\begin{proof}
	First, we choose $\delta=\delta(\eps,s,p)\in\left(0,\frac12\right)$ such that
	\begin{equation}\label{wdtM}
	2\wdt M_{\eps, s,p}\delta^2\leq\eps \text{
	and }2\eps^2\wdt M_{\eps, s,p}2\delta\leq\eps
\text{ and } \frac{\wdt C_0 2\delta^2}\eps\leq 1
\end{equation}
	where $\wdt M_{\eps,s,p}$ is determined as in \eqref{e2-prop2.1}.
We will choose $\varsigma\in (0,\delta)$ later. Now, we assume that
$(\wdt s-s)^2+(\wdt p-p)^2+\wdt x^2\leq\varsigma^2$ which leads to
$(\wdt s-s)^2+(\wdt p-p)^2+\delta^2\leq2\delta^2$.

Define
$$
\Omega_1:=\left\{e^{2\gamma_0 t}[(\BS(t)-\wdt S(t))^2+(\BP(t)-\wdt P(t))^2] \leq \frac{\wdt C_0 ((s-\wdt s)^2+(p-\wdt p)^2+\delta^2)}\eps\leq 1\right\}
$$
Because of the egodicity of $(\BS(t),\BP(t),\xi(t))$, we have that
$$
\PP_{s,p,k}\left\{\frac1t\int_0^t \left(\mu_2(\xi(t))\BS(t)+h_1(\xi(t))\BP(t)-D(\xi(t))-\frac{\sigma_2^2(\xi(t))}2\right) dt=\lambda<0\right\}=1.$$
We also get that if we define
$$
\Omega_2:=\left\{\frac1t\int_0^t \left(\mu_2(\xi(s))\BS(s)+h_1(\xi(s))\BP(s)-D(\xi(s))-\frac{\sigma_2^2(\xi(s))}2\right)ds\leq \Lambda+\gamma_0, t\geq T\right\}
$$
there is $T>0$ such that $\PP_{s,p}(\Omega_2)>1-\eps$.

Letting 
$$
\Omega_3:=\left\{\int_0^t \left(\mu_2(\xi(s))\BS(s)+h_1(\xi(s))\BP(s)-D(\xi(s))-\frac{\sigma_2^2(\xi(s))}2\right)ds\leq \wdt D_{s,p,\eps, T}, t\leq T\right\}
$$
we see that in view of \eqref{e3-thm1}  we can find $\wdt D_{s,p,\eps, T}>0$ such that $\PP_{s,p}(\Omega_3)\geq 1-\eps.$

Moreover, if we set
$$
\Omega_4:=\left\{\int_0^t\sigma_2(\xi(s))dW_2(s)\leq \frac{\check\sigma^2}{2\gamma_0}|\ln\eps|+\gamma_0 t, \forall\, t\geq0\right\}
$$
using the exponential martingale inequality, see e.g. \cite{mao}, we get $\PP(\Omega_4)\geq 1-\eps$.

	By Lemma \ref{lm1}, there is $M_2(\eps,s,p)$ such that $\Omega_5\geq 1-\eps$, where
	$$
\Omega_5=\{\BS(t)+\BP(t)+\BS^{-1}(t)+\BP^{-1}(t)\leq M_2(\eps,s,p)e^{\gamma_0 t}\}.
$$
Let $K=\max\{h_1(k), \mu_2(k),k\in\M\}$,
for $0\leq t\leq T\wedge\wdt\tau_\delta$, $\omega\in\bigcap_{i=1}^5\Omega_i$ we have that
\begin{equation}\label{e11-prop2.1}
	\begin{aligned}
		\ln\wdt X(t)=&\ln\wdt x+\int_0^t \left(\mu_2(\xi(s))\wdt S(s)+h_1(\xi(s))\wdt P(s)-D(\xi(s))-\frac{\sigma_2^2(\xi(s))}2\right)ds
		+\int_0^t\sigma_2(\xi(s))W_2(s)\\
	\leq& \ln\wdt x+\int_0^t \left(\mu_2(\xi(t))\BS(t)+h_1(\xi(t))\BP(t)-D(\xi(t))-\frac{\sigma_2^2(\xi(t))}2\right) dt+\int_0^t\sigma_2(\xi(s))W_2(s)\\
	&+K\int_0^t |(\wdt Z(t))^2+(\BP(s)-\wdt P(s))^2|^{\frac12}ds\\
	\leq& \ln\wdt x +\frac{\check\sigma^2}{2\gamma_0}|\ln\eps| +\wdt D_{s,p,\eps, T}+\gamma_0 t+K\int_0^t e^{-\gamma_0s} ds\\
	\leq&\ln\wdt x +\frac{\check\sigma^2}{2\gamma_0}|\ln\eps| + \gamma_0t+ \wdt D_{s,p,\eps, T}+\frac{K}{\gamma_0},
	\end{aligned}
\end{equation}
and that
\begin{equation}\label{e11b-prop2.1}
	\begin{aligned}
\ln (\wdt S(t)+\wdt P(t))\leq& \ln (\BS(t)+\BP(t)+1)
\leq & \ln M_{\eps,x,y,2}+\gamma_0t .
\end{aligned}
\end{equation}
If $\ln\wdt x< \ln\varsigma:=\ln\delta - \left( \ln M_{\eps,x,y,2}+\frac{\check\sigma^2}{2\gamma_0}|\ln\eps| + \gamma_0T+ \wdt D_{s,p,\eps, T}+\frac{K}{\gamma_0} \right)$
then it is easily seen that
$\wdt\tau_\delta\geq T$ for any $\omega\in\bigcap_{i=1}^5\Omega_i$ because $$\ln\wdt X(t)+\ln (\wdt S(t)+\wdt P(t))\leq \ln\delta$$ for any $t\leq T\wedge\wdt\tau_\delta$ for any $\omega\in\bigcap_{i=1}^5\Omega_i$. For $T\leq t\leq \wdt\tau_\delta$ we have from \eqref{e11-prop2.1} and \eqref{e11b-prop2.1} that
$$
\ln \wdt X(t)+\ln (\wdt S(t)+\wdt P(t))\leq \ln\wdt x+ (\Lambda+3\gamma_0)t+\frac{\check\sigma^2}{2\gamma_0}|\ln\eps| + \wdt D_{s,p,\eps, T}+\frac{K}{\gamma_0}+\ln M_{\eps,x,y,2}  <\ln\delta.$$
In particular, we have
$$
\ln \wdt X(t)+\ln (\wdt S(t)+\wdt P(t))\leq \ln\wdt x+ (\Lambda+2\gamma_0)t+\frac{\check\sigma^2}{2\gamma_0}|\ln\eps| + \wdt D_{s,p,\eps, T}+\frac{K}{\gamma_0}$$
for $\omega\in\bigcap_{i=1}^5\Omega_i$
which leads to
\begin{equation}\label{lnwdtX}
	\limsup_{t\to\infty} \frac{\ln\wdt X(t)}t\leq \Lambda-2\gamma_0<0 \text{ for }\omega\in\bigcap_{i=1}^5\Omega_i.
\end{equation}
An application of the Cameron-Martin-Girsanov theorem implies that
 under the measure $\Q_{s,p,\wdt\by}$ defined by
$$
\dfrac{d\Q_{s,p,\wdt\by}}{d\PP_{s,p,\wdt\by}}=\exp\left\{-\int_0^{\wdt\tau_\delta}[v_1(s)dW_1(s)+v_2(s)dW_2(s)]-\int_0^{\wdt\tau_\delta}[v_1^2(s)+v_2^2(s)]ds\right\},
$$
$\left(W_1(t)+\int_0^{t\wedge\wdt\tau_\delta}v_1(s)ds,W_2(t)+\int_0^{t\wedge\wdt\tau_\delta}v_2(s)ds\right)$ is a standard two-dimensional brownian motion under $\Q$.
As a result,
$(\wdt S(t), \wdt X(t),\wdt P(t))$ is the solution to \eqref{main} with initial condition $(\wdt\by,k)$ under $\Q$.

Let
$$
\Omega_6:=\left\{\int_0^{\wdt\tau_\delta}|v_1(t)|^2+|v_2(t)|^2)dt\geq \wdt M_{\eps,s,p}\right\}$$
and
$$
\Omega_7:=\left\{\int_0^t (v_1(s)dW_1(s)+v_2(s)dW_2(s))\leq\frac{\eps^2}{2\delta} \int_0^{t}|v_1(s)|^2+|v_2(s)|^2)ds+\eps\right\}.
$$
In view of the exponential martingale inequality, we have
$$
\PP_{s,p,\wdt\by}(\Omega_7)\geq 1-e^{\eps^3/\delta}\geq 1-\eps
$$
if $\delta\leq \eps^3/(-\ln\eps)$.
For $\omega\in\Omega_6\bigcap\Omega_7$, we get that
\begin{equation}
	\begin{aligned}
		\dfrac{d\Q_{s,p,\wdt\by,k}}{d\PP_{s,p,\wdt\by,k}}=&\exp\left\{-\int_0^{\wdt\tau_\delta}[v_1(s)dW_1(s)+v_2(s)dW_2(s)]-\int_0^{\wdt\tau_\delta}[v_1^2(s)+v_2^2(s)]ds\right\}\\
		\geq & \exp\left\{-\frac{\eps^2}{2\delta} \int_0^{t}|v_1(s)|^2+|v_2(s)|^2)ds-\eps-\int_0^{\wdt\tau_\delta}[v_1^2(s)+v_2^2(s)]ds\right\}\\
		\geq & e^{-\frac{\eps^2 \wdt M_{\eps,x,y}2\delta^2}{2\delta}-\eps -\wdt M_{\eps,x,y}2\delta^2}\geq e^{-3\eps}\geq 1-4\eps \text{ (due to \eqref{wdtM})}.
	\end{aligned}
\end{equation}
As a result,
$$\Q_{s,p,\wdt\by,k}\left(\bigcap_{i=1}^7\Omega_i\right)=\int_{\bigcap_{i=1}^6\Omega_i} \dfrac{d\Q_{s,p,\wdt\by,k}}{d\PP_{s,p,\wdt\by,k}} d\PP_{s,p,\wdt\by,k}\geq (1-4\eps)\PP_{s,p,\wdt\by,k}\left(\bigcap_{i=1}^6\Omega_i\right)\geq (1-4\eps)(1-6\eps)=1-10\eps.
$$

Note that for $\omega\in\bigcap_{i=1}^7\Omega_i$, we have $\wdt\tau_\delta=\infty$ and
\begin{equation}\label{e7-lm2}e^{\gamma_0 t}[(\BS(t)-\wdt S(t))^2+(\BP(t)-\wdt P(t))^2] \leq \frac{\wdt C_{0} ((\bar x-\wdt x)^2+(\bar y-\wdt y)^2+\delta^2)}\eps.
\end{equation}
Thus, we have from \eqref{lnwdtX} and \eqref{e7-lm2} that
\begin{equation}\label{e8-lm2}
\lim_{t\to\infty}[\wdt X(t)+|\wdt S(t)-\BS(t)|+|\wdt P(t)-\BP(t)|]=0.
\end{equation}
\begin{equation}\label{e9-lm2}
	\begin{aligned}
		\ln\wdt X(t)=&\ln\wdt x+\int_0^t \left(\mu_2(\xi(s))\wdt S(s)-h_1(\xi(s))\wdt P(s)-D(\xi(s))-\frac{\sigma_2^2(\xi(s))}2\right)ds
		+\int_0^t\sigma_2(\xi(s))W_2(s)\\
		=& \ln\wdt x+\int_0^t \left(\mu_2(\xi(t))\BS(t)-h_1(\xi(t))\BP(t)-D(\xi(t))-\frac{\sigma_2^2(\xi(t))}2\right) dt+\int_0^t\sigma_2(\xi(s))W_2(s)\\
		&+\int_0^t \left[\mu_2(\xi(t))[\BS(s)-\wdt S(s)]-h_1(\xi(t)[\BP(s)-\wdt P(s)]\right]ds
	\end{aligned}
\end{equation}
Diving both sides of \eqref{e9-lm2} by $t$ and then letting $t\to \infty$, we have from \eqref{e8-lm2} and the ergodicity of $(\BS(t),\BP(t),\xi(t))$ that
$$
\lim_{t\to\infty}\frac{\ln\wdt X(t)}t=\Lambda<0 \text{ for almost all }\omega\in\bigcap_{i=1}^7\Omega_i.
$$
Finally, because $\hat S(t)$ is the solution to \eqref{main} with initial condition $\widetilde\by$ under $\Q$ and $\Q_{s,p,\wdt\by,k}(\bigcap_{i=1}^7\Omega_i)\geq 1-10\eps$, we can claim that
$$
\PP_{\wdt\by}\left\{\lim_{t\to\infty}\frac{\ln X(t)}t=\Lambda<0\right\}=\Q_{s,p,\wdt\by,k}\left\{\lim_{t\to\infty}\frac{\ln \widetilde X(t)}t=\Lambda<0\right\}\geq 1-11\eps
$$
as long as $(\wdt s-s)^2+(\wdt p-p)^2+\wdt x^2\leq\varsigma^2$.
\end{proof}
\begin{proof}[Proof of Theorem \ref{thm2}.]
	Proposition \ref{lm2} shows that there is no invariant measure on $\R^{3,\circ}_+$.
	So $\bmu_b$ is the unique ergodic invariant probability measure of $(\BY(t),\xi(t))$ on $\R^3_+\times\M$.
	In view of Theorem \ref{thm1}, the family
	$\left\{ \Pi_t^{\by,k}(\cdot):=\dfrac1t\int_0^t\PP_{\by}\left\{(\BY(s),\xi(s))\in\cdot\right\}ds, t\geq 0\right\}$
	is tight in $\R^3_+\times\M$ and
	 any weak-limit of $\Pi_t^{\by,k}$ as $t\to\infty$ must be an invariant probability measure of $(\BY(t),\xi(t))$, that is, the weak-limit must be the unique invariant measure $\bmu_b$; we can refer to \cite{ethier2009markov, HN16, B23} for the proof that limit points of $\Pi_t^{\by,k}$ have to be invariant measures of $(\BY(t),\xi(t))$.
	 
	Let $R_\eps>0$ such that $\mu_{12}([R_{\eps}^{-1},R_{\eps}]^2)\geq 1-\eps.$
	By the Heine–Borel covering theorem, there exists $(x_1,y_1),\cdots, (x_l,y_l)$ such that
	$[R_{\eps}^{-1},R_{\eps}]^2$ is covered by the union of disks centered at $(x_k,y_k)$ with radius $\frac12\varsigma_{x_k,y_k,\eps}$, $k=1,\cdots,n$. Here $\varsigma$ is determined as in Proposition \ref{lm2}.
	Then, for any $\wdt\by\in [R_{\eps}^{-1},R_{\eps}]^2\times(0,\frac12\varsigma_m)$ with $\varsigma_m=\min_{k=1,\cdots,l}\{\varsigma_{x_k,y_k,\eps}\}$, there exists
	$k_{\wdt\by}\in\{1,\cdots,l\}$ such that
	$$(\wdt s-s_{k_{\wdt\by}})^2+(\wdt p-p_{k_{\wdt\by}})^2+\wdt x^2\leq\varsigma_m^2$$
	Thus, \begin{equation}\label{e3-thm3}
	\PP_{\wdt\by}\left\{\lim_{t\to\infty}\frac{\ln (t)}t=\Lambda<0\right\}>1-\eps, \wdt\by\in [R_{\eps}^{-1},R_{\eps}]^2\times(0,\varsigma_m).
	\end{equation}
For $\eps>0$ let $R_{\eps}>0$ be sufficiently large that $\bmu_{b}([R_{\eps}^{-1},R_{\eps}]^2\times\M)>1-\eps$. Because of the weak convergence of $\Pi_t^{\by,k}$ to $\bmu_b$, there exists a  $\check T=\check T(\by,\eps)>0$ such that
	$$\check\Pi^{\check T}_{\by}([R_{\eps}^{-1},R_{\eps}]^2\times(0,\varsigma_m))>1-\eps$$
	or in other words,
	$$\dfrac1{\check T}\int_0^{\check T}\PP_{\by}\{(\BY(t),\xi(t))\in ([R_{\eps}^{-1},R_{\eps}]^2\times(0,\varsigma_m))\times\M\}dt>1-\eps.$$
	Consequently,
	$$\PP_{\by}\{\hat\tau\leq\check T\}>1-\eps$$
	where $\hat\tau=\inf\{t\geq 0: \BY(t)\in [R_{\eps}^{-1},R_{\eps}]^2\times(0,\varsigma_m)\}$.
	Using the strong Markov property and \eqref{e3-thm3},
	we deduce that
	\begin{equation}
		\PP_{\by}\left\{ \lim_{t\to\infty}\frac{\ln X(t)}{t}=\Lambda\right\}\geq (1-\eps)(1-\eps)\geq 1-2\eps \text{ given } \by\in\R^{3,\circ}_+.
	\end{equation}
	Letting $\eps\to0$ we obtain the desired result.
\end{proof}
\section{Proof of Theorem \ref{thm3}}\label{s:pers}
Theorem \ref{thm3} is proved by arguments developed in \cite{benaim2022stochastic}.
We begin with citing \cite[Lemma 4.6]{benaim2022stochastic}.
\begin{lm}\label{lm3.0}
	Let $1<p\leq 2$. There exists $c_p>0$ such that for any $a>0$ and $x\in\R$ we have
	\beq\label{lm3.0-e1}
	|a+x|^{p}\leq
		a^{p}+pa^{p-1}x+c_p|x|^{p}
	\eeq
	Moreover,  there exists $d_{p, b}>0$ depending only on $p,b>0$ such that if $x + a \geq 0$ then
\beq\label{lm3.0-e3}
(a+x)^{p}-b(a+x)^{p-1} \leq
a^{p}+ pa^{p-1} x-\frac{b}2 a^{p-1}+c_{b,p}(|x|^{p}+1)
\eeq
It follows straightforwardly from \eqref{lm3.0-e1} that
	for a random variable $R$ and a constant $c>0$, one has that there exists $\tilde K_c >0$ such that
	\beq\label{lm3.0-e2}
	\E |R+c|^{p}\leq
		c^{p}+pc^{p-1}{\E Y}+\tilde K_c\E |R|^{1+p} .
	\eeq
	\end{lm}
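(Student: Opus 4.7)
The plan is to prove the three inequalities in order; \eqref{lm3.0-e1} carries all the content, while \eqref{lm3.0-e3} and \eqref{lm3.0-e2} follow by elementary manipulations.

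For \eqref{lm3.0-e1}, I first reduce to $a = 1$ by the substitution $x = ay$, using $|a+x|^p = a^p|1+y|^p$, so it suffices to show $|1+y|^p \leq 1 + py + c_p|y|^p$ on $\mathbb{R}$. I split into three subcases. On $y \geq 0$, Taylor's theorem with integral remainder for $f(t) = t^p$ at base point $1$ gives
\[
(1+y)^p - 1 - py = p(p-1)\int_0^y (1+s)^{p-2}(y-s)\,ds,
\]
and since $p - 2 \leq 0$ forces $(1+s)^{p-2} \leq s^{p-2}$ for $s > 0$, the right side is at most $p(p-1)\int_0^y s^{p-2}(y-s)\,ds = y^p$ (the improper integral at $s = 0$ converges because $p > 1$). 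On $y \in [-1, 0]$, the substitution $t = 1+y \in [0,1]$ turns the required estimate into bounding $t^p - pt + (p-1)$ by $c_p(1-t)^p$; a Taylor expansion at $t = 1$ shows $t^p - pt + (p-1) \sim \tfrac{p(p-1)}{2}(1-t)^2$ as $t \to 1$, so the ratio $[t^p - pt + (p-1)]/(1-t)^p$ tends to $0$ there (using $p < 2$) and equals $p-1$ at $t = 0$, hence is bounded on the compact interval $[0,1]$ by continuity. On $y < -1$, setting $z = -y > 1$ reduces the task to showing $(z-1)^p + pz - 1 \leq c_p z^p$, whose ratio equals $p-1$ at $z = 1$ and tends to $1$ as $z \to \infty$, hence is bounded on $[1,\infty)$. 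Taking the maximum of the three constants produces a single $c_p$ valid globally.

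For \eqref{lm3.0-e3}, I apply \eqref{lm3.0-e1} to $(a+x)^p$ (which equals $|a+x|^p$ since $a + x \geq 0$) and then handle the $-b(a+x)^{p-1}$ term by a dichotomy. If $(a+x)^{p-1} \geq \tfrac{1}{2} a^{p-1}$, then $-b(a+x)^{p-1} \leq -\tfrac{b}{2} a^{p-1}$ directly. Otherwise $a + x \leq 2^{-1/(p-1)} a$, forcing $|x| \geq (1 - 2^{-1/(p-1)}) a$, so $a^{p-1} \leq C_p |x|^{p-1}$ for an explicit $C_p = C_p(p)$; combining with the elementary inequality $|x|^{p-1} \leq |x|^p + 1$, the extra $\tfrac{b}{2} a^{p-1}$ is absorbed into $c_{b,p}(|x|^p + 1)$. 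Adding the $c_p|x|^p$ from \eqref{lm3.0-e1} completes the bound.

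Inequality \eqref{lm3.0-e2} is obtained by applying \eqref{lm3.0-e1} pointwise with $a = c$ and $x = R$ and then taking expectations; $\tilde K_c$ can be taken to be the $c_p$ from \eqref{lm3.0-e1}, independent of $c$. The main obstacle is the $y \in [-1, 0]$ subcase of \eqref{lm3.0-e1}: a second-order Taylor remainder of order $y^2$ is weaker than $|y|^p$ for small $|y|$ when $p < 2$, so one cannot just invoke Taylor at $y = 0$. The substitution $t = 1 + y$ combined with the expansion at $t = 1$ sidesteps this by showing that the offending ratio actually vanishes at $t = 1$ (the $|y|^{2-p}$ scaling), which is the key observation that makes the proof go through.
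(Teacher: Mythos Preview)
The paper does not actually prove this lemma: it is introduced with the sentence ``We begin with citing \cite[Lemma 4.6]{benaim2022stochastic}'' and no argument is given. So there is no in-paper proof to compare against; your write-up supplies what the paper outsources to a reference.

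Your argument is correct. The reduction to $a=1$ and the three-range split for \eqref{lm3.0-e1} are clean; the Taylor-remainder bound on $y\ge 0$ and the compactness/limit arguments on $[-1,0]$ and $(-\infty,-1)$ all go through. One cosmetic point: in the $y\in[-1,0]$ case you write ``using $p<2$'' for the ratio tending to $0$, but the lemma allows $p=2$. That case is harmless, since for $p=2$ the identity $(1+y)^2=1+2y+y^2$ makes \eqref{lm3.0-e1} exact with $c_2=1$ (and indeed your ratio equals $1$ identically), so the boundedness conclusion still holds. Your treatment of \eqref{lm3.0-e3} via the dichotomy on $(a+x)^{p-1}\gtrless \tfrac12 a^{p-1}$ is correct, and \eqref{lm3.0-e2} is, as you say, just \eqref{lm3.0-e1} in expectation. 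You might also flag that the printed statement of \eqref{lm3.0-e2} contains evident typos (an ``$\E Y$'' that should be $\E R$, and an exponent $1+p$ that should be $p$); your proof addresses the intended inequality.
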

In this section, let
$\gamma_2>0$, $\gamma_3>0$ such that
$$
L(\gamma_2\vee\gamma_3)\leq \frac12\min\{\alpha_1,\alpha_4-\alpha_2,\alpha _5- \alpha_3\}
\text{ and }
\gamma_2\Lambda-\gamma_3 \left(\alpha_3+\frac{\sigma_3^2}2\right)>0 $$
and pick
$$\rho=\frac12\left[\left(-\gamma_3\lambda_2\right)\vee \left(\gamma_2\lambda_1-\gamma_3\left(\alpha_3+\frac{\sigma_3^2}2\right) \right)\right]>0.$$
Let $\beta>0$ such that $\beta\max\{h_1(k),\mu_2(k): k\in\M\}\leq\frac{\alpha_0}2$.
Pick $c_1>0$ such that  $(s+\alpha x+p) -\beta \ln x+c_1\geq 0$ for any $(s,x,p)\in\R^{3,\circ}_+$
and 
$$
V(\by)=U(\by)-\beta\ln x+c_1=s+\alpha x+p -\beta \ln x+c_1, \by\in\R^{3,\circ}_+.
$$
Using \eqref{LUP} we get the following estimate
\begin{equation}\label{LV}
	\begin{aligned}
[\op V](\by,k)=&[\op U](\by,k) -\beta\left(\mu_2(k)s-h_1(k)p-\frac{\sigma_2^2(k)}2\right) \\
\leq& C_1-\alpha_0 (s+\alpha x+p) -\beta\left(\mu_2(k)s-h_1(k)p-\frac{\sigma_2^2(k)}2\right)\\
\leq & C_1-\frac{\alpha_0}2(s+\alpha x+p)\\
\leq & C_1\1_{\{U(\by)\leq M\}}-\frac{\alpha_0}4 U(\by)
\end{aligned}
\end{equation}
where $M=\frac{4C_1}{\alpha_0}$.
\begin{lm}\label{lm3.1}
	There exist $T^\backprime>0, \delta>0$ such that
	$$
	\E_{\by,k}\int_0^T \op V(\BY(s))ds \leq -\frac{\beta\lambda}2 T $$
	for any  $T\in [T^\backprime, n^\backprime T^\backprime]$, $\by\in\R^{3,\circ}_+$, $|\by|\leq M$, and $\dist(\by,\partial\R^{3,\circ}_+)\leq\delta$.
	
\end{lm}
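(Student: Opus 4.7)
The plan is to compare the time integral of $\op V$ along the full process $(\BY,\xi)$ with the corresponding integral along the boundary process $(\hat S,\hat P,\xi)$ from \eqref{main1}, and then to invoke the ergodicity of $(\hat S,\hat P,\xi)$ supplied by Corollary \ref{c:1}. From \eqref{LV} I read off
\[
\op V(s,x,p,k)\;=\;\op U(s,x,p,k)\;-\;\beta\bigl(\mu_2(k)s-h_2(k)p-\tfrac12\sigma_2^2(k)\bigr),
\]
and at $x=0$ the term $\op U(\cdot,0,\cdot,k)=D(k)(S_0-s)+D(k)(P_0-p)$ is precisely the generator of $(\hat S,\hat P,\xi)$ acting on $U|_{x=0}=s+p$. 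Invariance of $\bmu_b$ together with the polynomial moment bound of Corollary \ref{c:1}(2) yields $\int \op U(s,0,p,k)\,d\bmu_b=0$, and therefore, by \eqref{e:lambda},
\[
\int \op V(s,0,p,k)\,d\bmu_b(ds\,dp,k)\;=\;-\beta\Lambda\;<\;0.
\]

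First, using the ergodicity of $(\hat S,\hat P,\xi)$ together with the uniform moment estimate \eqref{e:mb}, I would pick $T^\backprime$ so large that, for every $T\ge T^\backprime$ and every $(s_0,p_0,k_0)\in\R^{2,\circ}_+\times\M$ with $s_0+p_0\le M$,
\[
\left|\,\tfrac1T\,\E_{s_0,p_0,k_0}\!\!\int_0^T \op V\bigl(\hat S(t),0,\hat P(t),\xi(t)\bigr)\,dt\;+\;\beta\Lambda\,\right|\;\le\;\tfrac{\beta\Lambda}{4}.
\]
Only $L^1$ convergence of the ergodic averages is needed here, which is fine because $\op V(\cdot,0,\cdot,k)$ has at most linear growth in $(s,p)$ and $\bmu_b$ integrates all polynomials up to order $q_0$.

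Second, for $\by=(s,x,p)$ with $x\le\delta$ I would couple the full system \eqref{main} and the boundary system \eqref{main1} through the same Brownian motions and switching process $\xi$. Grönwall/BDG-type estimates applied to the linearized difference equations for $S-\hat S$ and $P-\hat P$, combined with the bound $X(t)\le x\exp\!\bigl(\int_0^t(\mu_2S-D-h_2P-\tfrac12\sigma_2^2)ds+\sigma_2W_2(t)\bigr)$ and the localization \eqref{e3-thm1}, produce a constant $C_1=C_1(M,n^\backprime T^\backprime)$ with
\[
\E_{\by,k}\sup_{t\le n^\backprime T^\backprime}\Bigl(|S(t)-\hat S(t)|+|P(t)-\hat P(t)|+X(t)\Bigr)\;\le\;C_1\,\delta.
\]
Since $\op V$ is continuous with linear growth on any compact set where $\BY$ is localized with high probability, this upgrades to
\[
\Bigl|\,\E_{\by,k}\!\!\int_0^T\!\!\op V(\BY,\xi)\,dt\;-\;\E_{s,p,k}\!\!\int_0^T\!\!\op V(\hat S,0,\hat P,\xi)\,dt\,\Bigr|\;\le\;\tfrac{\beta\Lambda}{4}\,T
\]
for every $T\in[T^\backprime,n^\backprime T^\backprime]$, provided $\delta$ is chosen small enough as a function of $M$, $T^\backprime$ and $n^\backprime$. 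Combining the two estimates delivers the asserted inequality.

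The main obstacle is the uniformity of the perturbation bound over the entire window $[T^\backprime,n^\backprime T^\backprime]$: because $\Lambda>0$ the density $X(t)$ tends to grow exponentially once $x>0$, so $\delta$ must be chosen of order $\exp(-C\,n^\backprime T^\backprime)$ times a tolerance, forcing the order of quantifiers $T^\backprime$, then $n^\backprime$, then $\delta$. A secondary point is that $\dist(\by,\partial\R^{3,\circ}_+)\le\delta$ also permits $s$ or $p$ to be small with $x>\delta$; on that portion of the boundary the inflow drift $D(k)(S_0-s)+D(k)(P_0-p)$ pushes $(S,P)$ away from zero in $O(1)$ time, so after absorbing a uniformly bounded transient contribution to the integral one reduces to the case $x\le\delta$ treated above.
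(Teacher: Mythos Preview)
Your proposal is correct in its main thrust but takes a genuinely different route from the paper. The paper does not couple with the boundary process at all. Instead it argues by contradiction through occupation measures: invoking tightness and weak-limit properties of the empirical measures $\check\Pi_T^{\hat\by,k}=\frac1T\int_0^T P_s(\hat\by,k,\cdot)\,ds$ (Claims (C1)--(C3), imported from \cite{HN16}), one shows that if the bound $\le -\tfrac34\beta\Lambda$ failed along some sequence $(\hat\by_n,T_n)$ on $\{x=0\}\cap\{|\by|\le M\}$ with $T_n\to\infty$, the occupation measures would converge weakly to the unique boundary invariant measure $\bmu_b$, and by uniform integrability the time averages would converge to $\int\op V\,d\bmu_b=-\beta\Lambda$, a contradiction. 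The extension from $\{x=0\}$ to a $\delta$-neighborhood is then dispatched in one line via the Feller property together with the moment bound \eqref{e1-thm1}. Your coupling argument is more quantitative---you correctly note that $\delta$ must shrink like $\exp(-C\,n^\backprime T^\backprime)$---whereas the paper's soft compactness argument never needs any such rate; conversely, your approach makes the mechanism (proximity of $\BY$ to the boundary dynamics over the finite window) explicit, which is informative.

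One point does not work as written. In your ``secondary point'' you claim that when $s\le\delta$ or $p\le\delta$ but $x>\delta$, the inflow drift restores $(S,P)$ in $O(1)$ time and one then ``reduces to the case $x\le\delta$.'' The first half is fine, but no such reduction occurs: in the persistence regime $\Lambda>0$ there is no reason for $X$ to become small. The paper's own proof in fact only treats the face $\{x=0\}$ and its neighborhood $\{x\le\delta\}$, and that is all that is actually used downstream in Proposition~\ref{lm3.3}: when $x\ge\delta$ and $|\by|\le M$ the function $V(\by)=U(\by)-\beta\ln x+c_1$ is uniformly bounded, so the cruder estimate \eqref{e6-lm3.3} already suffices there. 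The correct fix is to restrict attention to $x\le\delta$ rather than to argue a reduction.
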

\begin{proof}
	On the boundary $\R^2_+\times\{0\}\times\M$, $\bmu_b$ is the unique invariant probability measure..
	In view of Theorem \ref{thm1} and \cite[Lemma 3.3, Lemma 3.4, Lemma 3.5]{HN16},  we easily deduce that
	
	\begin{enumerate}
		\item[(C1)]	$(x+y+  z)^{q_0}$ is $\bmu_b$-integrable
		and
\begin{equation}\label{e0-lm3.1}
	\begin{aligned}
	\sum_{k\in\M}	&\int_{\R^3_+} [\op U](\by,k)\bmu_b(d\by,k)\\
	=&\sum_{k\in\M}	\int_{\R^3_+} \left[D(k)(s-\alpha x+y) + (\alpha \mu_2(k)-\mu_1(k))sx-(\alpha h_1(k)+h_2(k))xp\right]\bmu_b(d\by,k)=0
	\end{aligned}
\end{equation}
		\\
		\item[(C2)] $
		\{\check\Pi_t^{\by,k}: t\geq 1, |\by|\leq M\}
		$ is tight and all its weak-limit as $t\to\infty$ must be an invariant measure of $(\BY(t),\xi(t))$.
		\item[(C3)]
		For a sequence of bounded initial points $\{\by_k\in\R^3_+\}$ and an increasing sequence $T_k\to\infty$ as $k\to\infty$,
		if $\{\check\Pi_{T_k}^{\by_k}\}$ converges to $\mu$ as $T_k$ tends to $\infty$ then
		$$
		\lim_{k\to\infty} \int_{\R^3_+}h(\by)\check\Pi_{T_k}^{\by_k}(d\by)=\int_{\R^3_+}h(\by)\mu(d\by)
		$$
		for any continuous function $h(\by)$ satisfying $h(\by)\leq C_h(1+x+y)^{q}$ for some $C_h>0, 0<q<q_0$. 
	\end{enumerate}
Next, combining \eqref{LV}, \eqref{e0-lm3.1}, we obtain
\begin{equation}\label{e3-lm3.1}
\sum_{k\in\M}\int_{\R^3_+}[\op V](\by,k)\bmu_b(\by,k)=\sum_{k\in\M}\int_{\R^3_+}\left[[\op V](\by,k)+\beta\left(\mu_2(k)s-h_1(k)p-\frac{\sigma_2^2(k)}2\right)\right]\bmu_b(\by,k)=-\beta\lambda.
\end{equation}
Now, we show the existence of $T^\backprime=T^\backprime(M)>0$ such that if $\by\in\R^2_+\times\{0\}$ and $|\hat\by|\leq M$ then
\begin{equation}\label{e2-lm3.1}
\E_{\hat \by, k} \frac{1}{T}\int_0^T \op V(\BY(s))ds= \int_{\R^3_+}\op V(\by)\check\Pi_{T}^{\hat \by, k}(d\by)\leq -\frac34\beta\lambda.
\end{equation}
If it is untrue, there exists a sequence $\{\by_n,k_n\}\subset \R^2_+\times\{0\}\times\M$ such that $|\by_n|\leq M$ and a sequence $T_n\uparrow \infty$ such that
$$
\E_{\by_n,k_n}\frac1{T_n}\int_0^{T_n} \op V(X(s))ds>-\frac34\beta\lambda.
$$
In view of Claim (C2), there is a subsequence, still denoted by $\{\by_n,k_n\}$ and $\{T_n\}$ for convenience, such that
$\check\Pi_{T_n}^{\by_n, k_n}$ converges to an invariant probability measure $\bmu$ as $n\to\infty$.
Since $\bmu_b$ is the unique invariant measure on $\R^2_+\times\{0\}\times\M$, we deduce from Claim (C3) that
$$
\int_{\R^3_+}\op V(\by)\bmu_b(d\by)=\lim_{n\to\infty}\int_{\R^3_+}\op V(\by)\check\Pi_{T_n}^{\by_n,k}(d\by)\geq-\frac34\beta\lambda
$$
which contradicts \eqref{e3-lm3.1}. As a result, \eqref{e2-lm3.1} holds true.

Then, we deduce from the Feller-Markov property of $(\BY(t),\xi(t))$ and the uniform moment boundedness \eqref{e1-thm1} that there exists $\delta>0$ such that
$$\E_{\by,k}\int_0^T \op V(X(s))ds\leq -\frac{\beta\lambda}2 T, T\in[T^\backprime, n^\backprime T^\backprime]$$
for any $\by\in\R^3_+$, $U(\by)\leq M$.
\end{proof}

\begin{prop}\label{lm3.3}
	Let $q$ be any number in the interval $(1,q_0)$, $n^\backprime>0$ such that
	$(n^\backprime -1)\frac{\alpha_0}4-2^{q-1}C_1\geq \frac{\beta\lambda}4$,
	and $U(\by)=1+x+y+  z$.
	There are $\kappa^\backprime>0$ and $C, C^\backprime >0$ independent of $\by$ such that
	$$
	\E_\by [C|\BY(t)|^q+V^q(\BY(n^\backprime T^\backprime))]\leq CU^q(x)+V^q(x)-\kappa^\backprime [CU^q(\by)+V^q(\by)]^{\frac{q-1}q} + C^\backprime
	$$
\end{prop}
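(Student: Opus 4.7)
The plan is to establish a discrete-time sub-geometric Foster-Lyapunov inequality at the skeleton time $T:=n^\backprime T^\backprime$ for the combined Lyapunov function $F(\by):=CU^q(\by)+V^q(\by)$. Three ingredients drive the proof: the generator bound $\op U^q\leq C_q-q\alpha_0 U^q$ from the proof of Theorem~\ref{thm1}, yielding geometric contraction of $U^q$; the boundary drift estimate of Lemma~\ref{lm3.1}; and the convex inequality \eqref{lm3.0-e1} of Lemma~\ref{lm3.0}, which lifts a linear drift estimate on $V$ to one on $V^q$.

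For the $CU^q$ piece, Dynkin's formula with $q\in(1,q_0)$ gives $\E_\by U^q(\BY(T))\leq e^{-q\alpha_0 T}U^q(\by)+\bar C_q$, a geometric contraction stronger than the sub-geometric $F^{(q-1)/q}$ decay in the target. For the $V^q$ piece, apply \eqref{lm3.0-e1} with $a=V(\by)\geq 0$ and $x=V(\BY(T))-V(\by)\geq -V(\by)$, then take expectations:
\[
\E_\by V^q(\BY(T))\leq V^q(\by)+qV^{q-1}(\by)\,\E_\by\!\int_0^T\!\op V(\BY(s))\,ds+c_q\,\E_\by|V(\BY(T))-V(\by)|^q.
\]
I would then split by region. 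On the boundary layer $\{|\by|\leq M,\ \dist(\by,\partial\R^{3,\circ}_+)\leq\delta\}$, Lemma~\ref{lm3.1} supplies $\E_\by\int_0^T\op V\leq-\tfrac{\beta\lambda}{2}T$, contributing $-\tfrac{q\beta\lambda T}{2}V^{q-1}(\by)$; since $V(\by)$ is forced large by $-\beta\ln x$ while $U(\by)\leq M$ stays bounded, $V^{q-1}(\by)$ is comparable to $F(\by)^{(q-1)/q}$ and supplies the required drift. On the interior-bounded region $\{|\by|\leq M,\ \dist(\by,\partial)>\delta\}$, $F(\by)$ is uniformly bounded and the inequality holds by enlarging $C^\backprime$. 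On the exterior region $\{|\by|>M\}$, the geometric contraction of $CU^q$ dominates any growth of $V^q$ (choosing $C$ large and using $V\leq U+\beta|\ln x|+c_1$, where $|\ln x|$ is controlled by $U$ away from the boundary layer), producing $\E F(\BY(T))\leq F(\by)-\eta F(\by)+C'''$, which absorbs the weaker $F^{(q-1)/q}$ drift.

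The main obstacle is the uniform bound on $c_q\E_\by|V(\BY(T))-V(\by)|^q$ in the boundary layer, where $V(\by)\to\infty$ as $x\to 0^+$. It reduces to a uniform-in-$x$ $L^q$-bound on $\ln X(T)-\ln X(0)$, which follows from the explicit representation
\[
\ln X(T)-\ln x=\int_0^T\!\Bigl[\mu_2(\xi(s))S(s)-D(\xi(s))-h_2(\xi(s))P(s)-\tfrac{\sigma_2^2(\xi(s))}{2}\Bigr]ds+\int_0^T\!\sigma_2(\xi(s))\,dW_2(s)
\]
together with the $L^q$-moment bounds on $S,P$ from Theorem~\ref{thm1}. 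The hypothesis $(n^\backprime-1)\tfrac{\alpha_0}{4}-2^{q-1}C_1\geq\tfrac{\beta\lambda}{4}$ is then used to absorb the indicator contribution $qC_1V^{q-1}\1_{\{U\leq M\}}$ from the $V^q$ generator on $[T^\backprime,n^\backprime T^\backprime]$, so that the negative drift from Lemma~\ref{lm3.1} on $[0,T^\backprime]$ is not cancelled on subsequent subintervals; choosing $\kappa^\backprime$ and $C^\backprime$ to match the worst of the three region bounds completes the assembly.
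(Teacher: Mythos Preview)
Your overall architecture (convex inequality \eqref{lm3.0-e1} to lift the linear $V$-drift to $V^q$, combined with the geometric $U^q$-contraction and Lemma~\ref{lm3.1}) matches the paper. The difference is in how the cases are split, and your split by \emph{initial region} has a genuine gap.

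The problem is your ``exterior'' region $\{|\by|>M\}$. You assert that there ``$|\ln x|$ is controlled by $U$ away from the boundary layer,'' but the set $\{U(\by)>M\}$ contains points with $s+p>M$ and $x\to 0^+$. At such points $V(\by)\sim -\beta\ln x\to\infty$ while $U(\by)$ stays of order $M$. In the convex expansion
\[
\E_\by V^q(\BY(T))\;\leq\; V^q(\by)+qV^{q-1}(\by)\,\E_\by\!\int_0^T\!\op V(\BY(s))\,ds\;+\;c_q\,\E_\by|V(\BY(T))-V(\by)|^q,
\]
you have no control on the sign of $\E_\by\int_0^T\op V$: the process can start with $U>M$ but enter $\{U\leq M\}$ almost immediately, after which the only available bound is $\op V\leq C_1$. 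The middle term can then be as bad as $+qC_1TV^{q-1}(\by)$, which blows up as $x\to 0^+$ and cannot be absorbed by the $CU^q$ contraction for any fixed $C$, since $U^q(\by)$ is only of order $M^q$. Your remainder bound via the $\ln X$ representation is fine; it is the \emph{drift} term, not the remainder, that fails here.

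The paper closes this gap by introducing the stopping time $\zeta=\inf\{t:U(\BY(t))\leq M\}\wedge n^\backprime T^\backprime$ and splitting on $\{\zeta\leq (n^\backprime-1)T^\backprime\}$ versus $\{\zeta>(n^\backprime-1)T^\backprime\}$. On $[0,\zeta]$ one has the \emph{pathwise} bound $V(\BY(t))\leq V(\by)-\tfrac{\alpha_0}{4}t+\wdt M(t)$ from \eqref{LV}; at time $\zeta$ the strong Markov property feeds $\BY(\zeta)$ (which now satisfies $U\leq M$) into the estimate \eqref{e5-lm3.3} from Lemma~\ref{lm3.1}. The hypothesis $(n^\backprime-1)\tfrac{\alpha_0}{4}-2^{q-1}C_1\geq\tfrac{\beta\lambda}{4}$ is used in the \emph{late} case $\zeta\geq(n^\backprime-1)T^\backprime$: the accumulated negative drift $-\tfrac{\alpha_0}{4}\zeta$ before hitting $\{U\leq M\}$ must dominate the possible $+C_1(n^\backprime T^\backprime-\zeta)$ afterward. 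Your reading of that hypothesis (balancing Lemma~\ref{lm3.1} on $[0,T^\backprime]$ against later subintervals) is not what is actually happening.
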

\begin{proof}
	First we assume that $1<q\leq 2$.
	In the sequel, $C^\backprime$ is a generic constant depending on $T^\backprime, M, n^\backprime$ but independent of $(\by,k)\in\R^3_+\times\M$.
	$C^\backprime$ can differ from line to line.
	Suppose $\BY(0)=\by$. We have from It\^o's formula that
	$$
	V(\BY(t))=V(\by)+\int_0^t \op V(\BY(s))ds+\wdt M(t),
	$$
	where
	$$
	\wdt M(t):=\int_0^t (\sigma_1(\xi(s)) S(s)dW_1(s)+\alpha\sigma_2(\xi(s)) X(s)dW_2(t)+ \sigma_3(\xi(s)) P(s)dW_3(s)-\beta\sigma_2(\xi(s))dW_2(s))
	$$
	 is a martingale with quadratic variation:
	\beq\label{e3-lm3.3}\langle \wdt M(t)\rangle=\int_0^t \left(\sigma_1^2(\xi(s))S^2(s)+\alpha^2\sigma_2^2(\xi(s)) (X(s)-\beta)^2+\sigma_3^2(\xi(s))P^2(s)\right)ds\leq K \int_0^t U^2(\BY(s))ds,
	\eeq
	for some constant $K=K(\sigma_1,\sigma_2,\sigma_3,\gamma_2,\gamma_3)$.
	
	Note that $\op V(\by)\leq C_1$. Thus $$V(X(T))=V(x)+\int_0^T \op V(X(s))ds+\wdt M(T)\leq V(x)+C_1 T+\wdt M(T).$$  Applying \eqref{lm3.0-e2} to the inequality above yields
	\beq\label{e6-lm3.3}
	\begin{aligned}
		\E_\by [V(\BP(T))]^q
		\leq &  V^q(\by) + qC_1 T V^{q-1}(\by)+ C^\backprime U^q(\by), \quad T\leq n^\backprime T^\backprime.
	\end{aligned}
	\eeq

		%
	On the other hand, since $|\op V(\by)|\leq K_0 (|\by|+1),\forall \by\in\R^3_+$ for some constant $K_0$, we deduce from It\^o's isometry and Hölder's inequality  that
	\beq\label{e4-lm3.3}
	\E_\by\left|\int_0^t LV(\BY(s))ds\right|^q+\E_\by \left|M(t)\right|^q \leq C^\backprime U^q(\by) \quad t\leq n^\backprime T^\backprime,\by\in\R^{3,\circ}_+.
	\eeq
	It follows from \eqref{e4-lm3.3} and \eqref{lm3.0-e2} that
	\beq\label{e0-lm3.3}
	\begin{aligned}
		\E_\by [V(\BY(t))]^q\leq&  V^q(\by) + q\left[\E_\by \int_0^t \op V(\BY(s))ds\right] V^{q-1}(\by)+ C^\backprime\E_\by\left|\int_0^t \op V(\BY(s))ds+M(t)\right|^q\\
		\leq &  V^q(\by) + q\left[\E_\by \int_0^t \op V(\BY(s))ds \right] V^{q-1}(\by)+ C^\backprime U^q(\by), \quad t\leq n^\backprime T^\backprime .
	\end{aligned}
	\eeq
	
Recall that if $U(\by)\leq M $ and $ \dist(\by,\partial \R_+^3)\leq \delta$, then  $\E_\by \int_0^t \op V(\BY(s))ds\leq -\frac{\beta\lambda}2 t$, $t\in[T^\backprime , n^\backprime T^\backprime ]$. As a result, for $T\in[T^\backprime , n^\backprime T^\backprime]$, if $U(\by)\leq M$,
	\beq\label{e5-lm3.3}
	\begin{aligned}
		\E_\by [V(\BY(T))]^q
		\leq &  V^q(\by) - q\frac{\beta\lambda}2 T V^{q-1}(\by)+ C^\backprime U^q(\by)\\
		\leq & V^q(\by) - q\frac{\beta\lambda}2 T V^{q-1}(\by)+ C^\backprime.
	\end{aligned}
	\eeq
%
		Define 	 $$\zeta=\inf\{t\geq 0: U(\BY(t))\leq M\}\wedge (n^\backprime T^\backprime).$$
For $t\leq \zeta$, we have from \eqref{LV} that
	\beq\label{e9-lm3.3}
	V(\BY(t))= V(\by)+\int_0^tLV(\BY(s))ds +\wdt M(t)\leq V(\by)- \frac{\alpha_0}4t+\wdt M(t).
	\eeq
	Using \eqref{e5-lm3.3} and the strong Markov property of $\{\BY(t),\xi(t)\}$, we can estimate
	\beq\label{e1-lm3.3}
	\begin{aligned}
		\E_x& \left[\1_{\{\zeta \leq T^\backprime (n^\backprime -1)\}} V^q(\BY(n^\backprime T^\backprime ))\right]\\[0.5ex]
		\leq &\E_x \left[\1_{\{\zeta \leq T^\backprime (n^\backprime -1)\}} \left[V^q(\BY(\zeta))+C^\backprime \right] \right]\\[0.5ex]
		&-\E_x\left[\1_{\{\zeta \leq T^\backprime (n^\backprime -1)\}}q\frac{\beta\lambda}2(n^\backprime T^\backprime -\zeta)V^{q-1}(\BY(\zeta))\right]\\[0.5ex]
		\leq& \E_x \left[\1_{\{\zeta \leq T^\backprime (n^\backprime -1)\}} (V(\by)+\wdt M(\zeta))^q+C^\backprime  \right] \\[0.5ex]
		&-q\frac{\beta\lambda}2 T^\backprime \E_x\left[\1_{\{\zeta \leq T^\backprime (n^\backprime -1)\}}(V(\by)+\wdt M(\zeta))^{q-1}\right]\\[0.5ex]
		&\text{ (due to \eqref{e9-lm3.3})}\\
		\leq& \E_x\left[\1_{\{\zeta \leq T^\backprime (n^\backprime -1)\}}\left( V^q(\by)-\frac{q\frac{\beta\lambda}2 T^\backprime }2 V^{q-1}(\by)+ q\wdt M(\zeta)V^{q-1}(\by)+C^\backprime (|\wdt M(\zeta)|^q+1)\right)\right]\\[0.5ex]
		&\text{ ( thanks to \eqref{lm3.0-e3} ).}
	\end{aligned}
	\eeq
	If $T^\backprime (n^\backprime -1)\leq \zeta \leq T^\backprime n^\backprime $ we have
	\begingroup
	\allowdisplaybreaks
	\begin{align*}
		\E_x &\left[\1_{\{\zeta \geq T^\backprime (n^\backprime -1)\}} V^q(\BY(n^\backprime T^\backprime ))\right]\\[0.5ex]
		\leq &\E_x \left[\1_{\{\zeta \geq T^\backprime (n^\backprime -1)\}} V^q(\BY(\zeta))+C^\backprime \right] \\[0.5ex]
		&+qC_1\E_x\left[\1_{\{\zeta \geq T^\backprime (n^\backprime -1)\}}(n^\backprime T^\backprime -\zeta)V^{q-1}(\BY(\zeta))\right] \\[0.5ex]
		&\text{ (by applying \eqref{e6-lm3.3} and the strong Markov property)}\\[0.5ex]
		\leq& \E_x \left[\1_{\{\zeta \geq T^\backprime (n^\backprime -1)\}} [(V(\by)+\wdt M(\zeta)- \frac{\alpha_0}4\zeta)^q+C^\backprime ]\right] \\[0.5ex]
		&+qC_1 T^\backprime \E_x\left[\1_{\{\zeta \geq T^\backprime (n^\backprime -1)\}}(V(\by)+\wdt M(\zeta)- \frac{\alpha_0}4\zeta)^{q-1}\right] \\[0.5ex]
		&\text{ (in view of \eqref{e9-lm3.3})}\\[0.5ex]
		\leq& \E_x\left[\1_{\{\zeta \geq T^\backprime (n^\backprime -1)\}}\left( V^q(\by)-q \frac{\alpha_0}4\zeta V^{q-1}(\by)+q\wdt M(\zeta) V^{q-1}(\by) + C^\backprime \left(|\wdt M(\zeta)|+1\right)^q\right)\right]\\[0.5ex]
		&+ 2qC_1 T^\backprime \E_x\left[\1_{\{\zeta \geq T^\backprime (n^\backprime -1)\}} \left( V^{q-1}(\by)+|\wdt M(\zeta)|^{q-1}\right)\right]\\[0.5ex]
		& \text{ (using \eqref{lm3.0-e1} and the inequality $|x+y|^{q-1}\leq 2(|x|^{q-1}+|y|^{q-1})$)} \\[0.5ex]
		\leq & \E_x \left[ \1_{\{\zeta \geq T^\backprime (n^\backprime -1)\}} \left( V^q(\by)-\frac{q\beta\lambda T^\backprime }4 V^{q-1}(\by)+q \wdt M(\zeta)V^{q-1}(\by) +C^\backprime \left(|\wdt M(\zeta)|+1\right)^q \right)\right] \\
		& \text{ (because } \frac{\alpha_0}4 \zeta\geq \frac{\alpha_0}4T^\backprime (n^\backprime -1))\geq (2C_1+\frac{\beta\lambda}4)T^\backprime).\\[-2ex]
		& \stepcounter{equation}\tag{\theequation}\label{e2-lm3.3}
	\end{align*}
	\endgroup
	As a result, by adding \eqref{e1-lm3.3} and \eqref{e2-lm3.3} and noting that $\E_x \wdt M(\zeta)=0$, we have
	\beq\label{e10-lm3.3}
	\begin{aligned}
		\E_x V^q(\BY(n^\backprime T^\backprime ))\leq&  V^q(\by)-q\frac{\beta\lambda}4 T^\backprime  V^{q-1}(\by)+ C^\backprime  \E_x (|\wdt M(\zeta)|+1)^q\\
		\leq& V^q(\by)-q\frac{\beta\lambda}4 T^\backprime  V^{q-1}(\by) + C^\backprime  U^{q}(\by)
	\end{aligned}
	\eeq
	where the inequality $\E_x (|\wdt M(\zeta)|+1)^q\leq C^\backprime  U^{q}(\by)$ comes from an application of
	the Burkholder-Davis-Gundy Inequality, Hölder's inequality and \eqref{e3-lm3.3} and \eqref{e2-thm1}.
	
	From \eqref{e1-thm1}, we have
	\beq\label{e11-lm3.3}
	\begin{aligned}
		\E_x U^q(\BY(n^\backprime T^\backprime ))\leq&  U^q(\by)-\left(1-e^{-k_{2q} n^\backprime T^\backprime }\right) U^q(\by) + \frac{k_{1q}}{k_{2q}}.
	\end{aligned}
	\eeq
	Combining \eqref{e10-lm3.3} and \eqref{e11-lm3.3}, we can easily get that
	\beq\label{e12-lm3.3}
	\E_x \left[V^q(\BY(n^\backprime T^\backprime ))+CU^q(\BY(n^\backprime T^\backprime ))\right]
	\leq V^q(\by)+C U^q(\by)-\kappa^\backprime  [V^q(\by)+CU^q(\by)]^{(q-1)/q} + C^\backprime ,
	\eeq
	for some $ \kappa^\backprime >0, C^\backprime >0$ and sufficiently large $C$.
\end{proof}

\section{Nonlinear Perturbed models}\label{s:nonlinear}
As a novelty, we also wanted to explore what happens when the white noise perturbations which influence \eqref{main_det} are nonlinear.
More specifically, assume that the dynamics is given by
\begin{equation}\label{main4}
	\begin{aligned}
		dS(t) &= [D(S_0-S(t))-\mu_1(\xi(t))S(t)X(t)]dt+(\sigma_1(\xi(t))+\sigma_4(\xi(t))S(t)) S(t)dW_1(t)\\
		dX(t)&=[\mu_2(\xi(t))S(t)X(t)-D(\xi(t))X(t)-h_2(\xi(t))X(t)P(t)]dt+(\sigma_2(\xi(t))+\sigma_5(\xi(t)X(t))X(t)dW_2(t)\\
		dP(t)&=[D(\xi(t))(P_0-P(t))-h_3(\xi(t))X(t)P(t)]dt+(\sigma_3(\xi(t))+\sigma_6(\xi(t))P(t))P(t)dW_3(t)
	\end{aligned}
\end{equation}
where $(W_1(t), W_2(t), W_3(t))$ is a standard Brownian motion on $\R^3$.
Let $\BY(t):=(S(t), X(t), P(t))$ and let $\by \in \R_{+}^{3,\circ}$ denote the initial conditions, that is $\BY(0):=(S(0), X(0), P(0))=\by$. We will use the framework developed in \cite{HNC20} which allows to treat processes and auxiliary variables. This way, the process can be in Kolmogorov form while the auxiliary variables can be in non-Kolmogorov form. 

Specifically, we assume that $(S(t), P(t),\alpha(t))$ is the auxiliary variable and we focus on the extinction and persistence of the process $X(t)$.
Letting $\wdt U(\by)= (s+x+p)^{\frac12}$
we have
\begin{equation}\label{lwdtU}
[\op_2 \wdt U](\by,k)\leq \wdt c_1 - \wdt c_2\wdt U^{3}(\by).
\end{equation}
We can therefore see that \cite[Assumption 3.1]{HNC20} holds for the model \ref{main4}.

When $X(t)=0\,t\geq0$, \eqref{main4} reduces to the system:
\begin{equation}\label{main5}
	\begin{aligned}
		d\BS(t) &= [D(S_0-\BS(t))]dt+(\sigma_1(\xi(t))+\sigma_4(\xi(t))\BS(t)) \BS(t)dW_1(t)\\
		d\BP(t)&=D(\xi(t))(P_0-\BP(t))dt+(\sigma_3(\xi(t))+\sigma_6(\xi(t))\BP(t))\BP(t)dW_3(t)
	\end{aligned}
\end{equation}
In view of \eqref{lwdtU}, by applying Dynkin's formula in the same manner as in the proof of Theorem \ref{thm1}, we have 
$$
\E_{\by,k} \wdt U(s,0,p)\leq e^{-\wdt c_2 t} \tilde U(s,0,p)+ \frac{\wdt c_1}{\wdt c_2}, t\geq 0,
$$
and 
$$
\E_{\by,k}\frac1t\int_0^t \wdt U^5(\BS(u),0,P(u))du\leq \wdt c_1 + \frac1t\wdt U(s,0,p).
$$
As a result, the set of invariant probability measures of $(\BS(t),\BP(t),\xi(t))$ is non empty and for any invariant probability measure $\bmu$
$$
\sum_{k\in\M}\int_{\R^2_+} U^5(s,0,p)\bmu(s,0,p,k)\leq\wdt c_1.
$$
Since $ U^5(s,0,p)=(s+p)^{\frac52}$ is $\bmu$-integrable, we can define
$$
\lambda(\bmu)=\sum_{k\in\M}\int_{\R^2_+} \left(\mu_2(k)s-h_2(k)p-\frac{\sigma_2^2(k)}2\right)\bmu(dsdp,k).
$$
While the probability measure $\bmu$ is probably not unique, the value $\lambda(\bmu)$ is unique due to the following claim.
\begin{lm} For any invariant probability measure $\bmu$ of $(\BS(t),\BP(t),\xi(t))$, we have 
$$	\lambda(\bmu)=\sum_{k\in\M}\left[D(k)(\xi(k)+\zeta(k))-\frac{\sigma_2^2(k)}2\right]\pi_k.$$
\end{lm}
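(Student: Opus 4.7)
The plan is to mirror the proof of Lemma \ref{lm0} using the same vectors $\boldsymbol{\eta}$ and $\boldsymbol{\zeta}$ defined by $\mathbf{A}^{-1}\mathbf{m}_2$ and $\mathbf{A}^{-1}\mathbf{h}_1$. The crucial observation is that the test function $f(s,p,k):=\eta(k)s+\zeta(k)p$ is \emph{linear} in $(s,p)$, so the second-order terms in It\^o's formula vanish identically; as a consequence, the nonlinear diffusion coefficients $\sigma_4,\sigma_6$ from \eqref{main5} contribute only to the martingale part, and the drift of $f(\BS(t),\BP(t),\xi(t))$ under \eqref{main5} coincides \emph{verbatim} with the drift computed in the linear-noise setting of Lemma \ref{lm0}.

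First I would compute the generator applied to $f$. Using the defining identities $\sum_{j}q_{kj}\eta(j)-D(k)\eta(k)=-\mu_2(k)$ and $\sum_j q_{kj}\zeta(j)-D(k)\zeta(k)=h_1(k)$ that come from $\mathbf{A}\boldsymbol{\eta}=\mathbf{m}_2$ and $\mathbf{A}\boldsymbol{\zeta}=\mathbf{h}_1$, one obtains
\[
\op_2 f(s,p,k)=D(k)\bigl[\eta(k)S_0+\zeta(k)P_0\bigr]-\mu_2(k)s+h_1(k)p.
\]
Next I would introduce a localization $\tau_n:=\inf\{t\ge 0:\BS(t)+\BP(t)\ge n\}$ and apply Dynkin's formula to the process started from the invariant distribution $\bmu$. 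The stopped martingale has zero expectation, so
\[
\E_{\bmu}\bigl[f(\BS(T\wedge\tau_n),\BP(T\wedge\tau_n),\xi(T\wedge\tau_n))\bigr]-\E_{\bmu}f(\BS(0),\BP(0),\xi(0))=\E_{\bmu}\!\int_0^{T\wedge\tau_n}\!\op_2 f(\BS(u),\BP(u),\xi(u))\,du.
\]

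Then I would pass $n\to\infty$. The family $\{f(\BS(T\wedge\tau_n),\ldots)\}_n$ is bounded in $L^{5/2}$ uniformly in $n$, because $(s+p)^{5/2}$ is $\bmu$-integrable by \eqref{lwdtU} and stationarity controls the stopped values; hence the family is uniformly integrable. Stationarity of $\bmu$ gives $\E_\bmu f(\BS(T),\BP(T),\xi(T))=\E_\bmu f(\BS(0),\BP(0),\xi(0))$, so the left-hand side tends to $0$. Noting that the $\M$-marginal of any invariant measure $\bmu$ must coincide with the unique stationary distribution $\pi$ of the irreducible chain $\xi$, dividing by $T$ yields
\[
0=\sum_{k\in\M}D(k)\bigl[\eta(k)S_0+\zeta(k)P_0\bigr]\pi_k+\sum_{k\in\M}\int_{\R^2_+}\bigl[-\mu_2(k)s+h_1(k)p\bigr]\bmu(ds\,dp,k).
\]
Rearranging to isolate $\sum_k\int[\mu_2(k)s-h_1(k)p]\bmu$ and subtracting $\sum_k\pi_k\sigma_2^2(k)/2$ reproduces the claimed formula for $\lambda(\bmu)$. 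Because the right-hand side depends on $\bmu$ only through its $\M$-marginal $\pi$, the value $\lambda(\bmu)$ is independent of the choice of invariant measure, exactly as the statement claims.

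The main obstacle is the martingale control: the quadratic variation involves $\sigma_4^2(\xi)\eta^2(\xi)\BS^4$-type contributions, and $\bmu$ is only known \emph{a priori} to carry $5/2$ moments. The trick is never to try to remove the localization from the martingale — the stopped martingale has mean $0$ for every $n$, and the limit $n\to\infty$ is taken only in the boundary term and the drift integral, both handled by uniform integrability at order $5/2$ together with $\tau_n\uparrow\infty$ a.s., the latter following from non-explosion via the Lyapunov bound \eqref{lwdtU}. All other steps are algebraic manipulations identical to those in the proof of Lemma \ref{lm0}.
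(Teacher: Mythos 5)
Your proposal is correct and follows essentially the same route as the paper, which simply declares the proof ``almost identical to Lemma \ref{lm0}'': you apply the generator to the linear test function $\eta(k)s+\zeta(k)p$, use the identities coming from $\mathbf{A}\boldsymbol{\eta}=\mathbf m_2$, $\mathbf{A}\boldsymbol{\zeta}=\mathbf h_1$, start from the invariant measure so the boundary terms cancel, and use that the $\M$-marginal of any invariant measure is $\pi$. Your explicit observation that linearity of the test function annihilates the second-order (hence the $\sigma_4,\sigma_6$) terms, together with the localization/uniform-integrability care, is exactly the reason the paper's Lemma \ref{lm0} argument transfers, so no further comparison is needed.
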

\begin{proof}
	The proof is almost identical to Lemma \ref{lm0} and is therefore omitted.
\end{proof}
Because \cite[Assumption 3.1]{HNC20} holds for model \ref{main4} and $\lambda(\bmu)$ is unique, we can apply Theorem 3.2, Theorem 3.4 and Theorem 3.5 of \cite{HNC20} to obtain the following result.

\begin{thm}
	If $\lambda(\bmu)>0$ then $X(t)$ is stochastically persistent in probability, that is
	for any $\eps>0$, there exists $\Delta>0$ such that
	$$
	\liminf_{t\to\infty}\PP_{\by,k}\{X(t)\geq\Delta\}\geq 1-\eps, (\by,k)\in\R^{3,\circ}_+\times\M.
	$$
	We also have that
	$$
	\PP_{\by,k}\left\{\liminf_{t\to\infty}\frac1t\int_0^tX(s)ds>0\right\}=1.
	$$
	If $\lambda(\bmu)<0$ then for any $\eps>0$, $(s,p)\in\R^2_+$, there exists $\delta_{s,p}>0$ such that
		$$
	\PP_{\wdt\by,k}\left\{\lim{t\to\infty}\frac1t
	\ln X(t)=\lambda(\bmu)<0\right\}\geq 1-\eps
	$$
	whenever $\wdt\by=(\wdt s,\wdt x,\wdt p)$ satisfies $|\wdt s-s|+|\wdt p-p|+\wdt x\leq\delta_{s,p}$.
	Moreover, if the boundary $\R^2_+\times\{0\}$ is accessible from the interior, that is, for any $\by\in\R^3_+, k\in \M$, we can find $K>0$ such that
$$
	\sup_{t\geq0}\PP_{\by,k}\{|S(t)|+|P(t)|\leq K \text{ and }X(t)<\eps\}>0  \text{ for any }\eps>0,
	$$
	then 
	$$
	\PP_{\by,k}\left\{\lim_{t\to\infty}\frac1t
	\ln X(t)=\lambda(\bmu)<0\right\}=1, \text{ for any }(\by,k)\in\R^{3,\circ}_+\times\M.
	$$
\end{thm}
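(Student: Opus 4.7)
The plan is to verify the hypotheses of the abstract persistence/extinction framework developed in \cite{HNC20} and then invoke their Theorems 3.2, 3.4, and 3.5 directly. The essential structural point is that $X$ appears in Kolmogorov form $dX = X[\mu_2(\xi)S - D(\xi) - h_2(\xi)P]\,dt + X(\sigma_2(\xi)+\sigma_5(\xi)X)\,dW_2$, while the auxiliary variables $(S,P,\xi)$ are not in Kolmogorov form; this is precisely the regime for which the auxiliary-variable framework of \cite{HNC20} was designed. First I would verify \cite[Assumption 3.1]{HNC20}, which requires a Lyapunov estimate for the full process, tightness with uniform moment bounds for the boundary process $(\bar S,\bar P,\xi)$, and the Markov-Feller property. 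The Lyapunov estimate \eqref{lwdtU} with $\wdt U=(s+x+p)^{1/2}$ is the main nontrivial input; the Markov-Feller property and non-explosion follow from standard switching-diffusion arguments as in \cite[Theorem 5.1]{nguyen2017certain}, and the boundary moment bound $\E\wdt U^5(\bar S,0,\bar P)\le \wdt c_1$ follows from Dynkin's formula applied to $\wdt U$ in the manner of the proof of Theorem \ref{thm1}.

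Second, I would confirm that the invasion rate $\lambda(\bmu)$ is well defined and independent of the (possibly non-unique) invariant probability measure $\bmu$ of $(\bar S,\bar P,\xi)$. The computation of Lemma \ref{lm0} carries over verbatim: applying It\^o's formula to $\eta(\xi(t))\bar S(t)+\zeta(\xi(t))\bar P(t)$, taking expectations under $\bmu$, and using the linear-algebraic identities $-D(k)\eta(k)+\sum_j q_{kj}\eta(j)=-\mu_2(k)$ and $-D(k)\zeta(k)+\sum_j q_{kj}\zeta(j)=h_1(k)$, one obtains again the closed-form expression $\sum_k\bigl[D(k)(\eta(k)+\zeta(k))-\sigma_2^2(k)/2\bigr]\pi_k$. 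Since this formula involves only the generator $Q$ and the model parameters, the value of $\lambda(\bmu)$ is independent of the choice of $\bmu$.

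With these two ingredients in place, the three conclusions follow by direct citation. If $\lambda(\bmu)>0$, Theorem 3.2 of \cite{HNC20} yields both stochastic persistence in probability of $X$ and $\PP_{\by,k}\{\liminf_{t\to\infty}t^{-1}\int_0^t X(s)\,ds>0\}=1$. If $\lambda(\bmu)<0$, Theorem 3.4 of \cite{HNC20} gives the local exponential extinction estimate for initial conditions sufficiently close to the boundary $\R^2_+\times\{0\}$, and Theorem 3.5 of \cite{HNC20} upgrades this to the global statement from any interior datum under the additional accessibility hypothesis.

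The main obstacle, as in the linear-noise analysis, is obtaining the moment control \eqref{lwdtU} required by \cite[Assumption 3.1]{HNC20}. Because the diffusion coefficients $(\sigma_1+\sigma_4 s)s$, $(\sigma_2+\sigma_5 x)x$, $(\sigma_3+\sigma_6 p)p$ are superlinear, the quadratic Lyapunov argument of Theorem \ref{thm1} cannot be transplanted directly; the choice $\wdt U=(s+x+p)^{1/2}$ is calibrated precisely so that the quadratic growth of the diffusion, which would otherwise destroy the dissipation, instead produces a negative cubic return $-\wdt c_2\wdt U^3$ that controls everything. Once this is in hand, moments of order up to $5$ for $\bar S+\bar P$ on the boundary are automatic, the integrand $\mu_2(k)s-h_2(k)p-\sigma_2^2(k)/2$ is $\bmu$-integrable for every invariant $\bmu$, and the remainder of the argument is a clean appeal to the abstract framework without further ad-hoc construction.
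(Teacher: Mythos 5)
Your proposal is correct and follows essentially the same route as the paper: verify \cite[Assumption 3.1]{HNC20} through the Lyapunov estimate \eqref{lwdtU} with $\wdt U(\by)=(s+x+p)^{1/2}$ (plus the Markov--Feller property and the boundary moment bounds obtained via Dynkin's formula as in Theorem \ref{thm1}), show that $\lambda(\bmu)$ is independent of the invariant measure $\bmu$ by repeating the computation of Lemma \ref{lm0}, and then conclude by citing Theorems 3.2, 3.4 and 3.5 of \cite{HNC20}. This matches the paper's argument for the nonlinear model \eqref{main4} in both structure and detail.
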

The next result gives us an easy condition which ensures almost sure extinction.
\begin{cor}
	Suppose that $\lambda(\bmu)<0$. If there is $k_0\in\M$ such that $\sigma_2(k_0)\sigma_5(k_0)>0$ then we have
	$$
	\PP_{\by,k}\left\{\lim_{t\to\infty}\frac1t
	\ln X(t)=\lambda(\bmu)<0\right\}=1, \text{ for any }(\by,k)\in\R^{3,\circ}_+\times\M.
	$$
\end{cor}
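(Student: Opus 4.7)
The plan is to invoke the last assertion of the preceding Theorem, which reduces everything to verifying the accessibility condition: for every $(\by,k)\in\R^{3,\circ}_+\times\M$ there exists $K>0$ such that $\sup_{t\geq 0}\PP_{\by,k}\{|S(t)|+|P(t)|\leq K,\,X(t)<\eps\}>0$ for every $\eps>0$. Once accessibility is established, the corollary is immediate.

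To verify accessibility from an arbitrary interior starting point $(\by,k)$, I first exploit the irreducibility of $Q$: for any prescribed $t_0>0$ and $T>0$, the event $\{\xi(s)=k_0 \text{ for all }s\in[t_0,t_0+T]\}$ has positive probability. Combined with the strong Markov property and the moment bound \eqref{e2-thm1} (which confines $(S(t_0),X(t_0),P(t_0))$ to a compact set with positive probability), this reduces the problem to the following: for the non-switching diffusion in mode $k_0$, starting from any interior point in a fixed compact set, exhibit a path of positive probability along which $X$ drops below $\eps$ while $S,P$ remain bounded by a $K$ depending only on the compact set and $T$.

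The second step applies the Stroock--Varadhan support theorem to this frozen-mode diffusion. Because $\sigma_2(k_0)\sigma_5(k_0)>0$, the diffusion coefficient $(\sigma_2(k_0)+\sigma_5(k_0)X)X$ is non-vanishing and of constant sign on $\{X>0\}$; without loss of generality take $\sigma_2(k_0),\sigma_5(k_0)>0$. In the associated Stratonovich control system, setting $u_1\equiv u_3\equiv 0$ and $u_2\equiv -M$ for $M$ large makes the $X$-equation satisfy $\dot X/X\leq -\kappa M$ on any fixed compact region of $(S,P)$, while the $(S,P)$-dynamics reduce to autonomous perturbations of the drift-only system and hence remain in a compact subset of $(0,\infty)^2$ throughout $[0,T]$. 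Choosing $T$ and $M$ appropriately, this target trajectory arrives at $X(T)<\eps$ with $S,P$ bounded, and the support theorem guarantees positive probability for any tube around it, delivering the required accessibility.

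The main technical obstacle is justifying the Stroock--Varadhan support theorem in this state-dependent-diffusion setting with switching, together with the bookkeeping of the It\^o--Stratonovich correction so that the controlled trajectory remains in the open orthant $(0,\infty)^3$. The hypothesis $\sigma_2(k_0)\sigma_5(k_0)>0$ is precisely what makes the control $u_2$ act effectively on $X$ at every scale of the interior, so the target path can be uniformly approximated by sample paths; if instead $\sigma_2(k_0)+\sigma_5(k_0)X$ could change sign on $\{X>0\}$, the control would lose grip at its zero and the approximation could fail.
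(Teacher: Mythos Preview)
Your approach is correct and essentially matches the paper's: both reduce the corollary to verifying accessibility of the boundary $\{X=0\}$ and establish this via the support theorem by exhibiting an explicit control that drives $X$ to zero while keeping $(S,P)$ bounded, with the hypothesis $\sigma_2(k_0)\sigma_5(k_0)>0$ used in exactly the same way to ensure the diffusion coefficient $\sigma_2(k_0)+\sigma_5(k_0)X$ never vanishes on $\{X>0\}$ so that the control on $X$ is effective at every scale. The only cosmetic differences are that the paper works directly with the switching-diffusion control system (taking $\psi\equiv k_0$, a feedback control $\phi_2$, and $\phi_1=-\sgn\sigma_4(k_0)$, $\phi_3=-\sgn\sigma_6(k_0)$ to force quadratic dissipation in $S_u+P_u$), whereas you first freeze the mode via irreducibility of $Q$ and then use constant controls $u_1=u_3=0$, $u_2=-M$ in the single-mode system.
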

\begin{proof}
	We only need to show that if $\sigma_2(k_0)\sigma_5(k_0)>0$ then the boundary $\R^2_+\times\{0\}$ is accessible from the interior. It turns out that the accessibility of a certain region of space can be checked using deterministic control theory.
Consider the following control system associated with \eqref{main4}:
\begin{equation}\label{control1}
	\begin{aligned}
		dS_u(t) &= [D(S_0-S_u(t))-\mu_1(\psi(t))S_u(t)X_u(t)+ \frac12(\sigma_1(\psi(t))+2\sigma_4(\psi(t))S_u(t))]dt \\&+ (\sigma_1(\psi(t))+\sigma_4(\psi(t))S_u(t)) S_u(t)\phi_1(t)dt\\
		dX_u(t)&=[\mu_2(\psi(t))S_u(t)X_u(t)-D(\psi(t))X_u(t)-h_2(\psi(t))X_u(t)P_u(t)+\dfrac12(\sigma_2(\psi(t))+2\sigma_5(\psi(t)X_u(t))]dt\\&+(\sigma_2(\psi(t))+\sigma_5(\psi(t)X_u(t))X_u(t)\phi_2(t)dt\\
		dP_u(t)&=[D(\psi(t))(P_0-P_u(t))-h_3(\psi(t))X_u(t)P_u(t)\\&+\frac12(\sigma_3(\psi(t))+2\sigma_6(\psi(t))P_u(t)]dt+(\sigma_3(\xi(t))+\sigma_6(\xi(t))P_u(t))P_u(t)\phi_1(t)dt
	\end{aligned}
\end{equation}
where $\phi_i(t), i=1,2,3$ are real-valued c\`adl\`ag functions and $\psi(t)$ a $\M$-valued  c\`adl\`ag function.
 In view of various support theorems, see Sections 4.3, 5.1 and 6 from and \cite{B23}, if we can find controls $u(t)=(\psi(t),\phi_1(t),\phi_2(t),\phi_3(t))$ satisfying \eqref{control1} such that
$$
\psi(0)=k, (S_u(0),X_u(0),P_u(0)=\by\quad \text{ and }
(S_u(T),X_u(T),P_u(T))=\by^\diamond=(s^\diamond, x^\diamond, p^\diamond)
$$
then \begin{equation}\label{support}\PP_{\by,k}\left\{|\BY(T)-\by^\diamond|\leq\eps \right\}>0 \text{ for any } \eps>0.\end{equation}

Now, we define a control as follows: $\psi(t)\equiv k_0, t\geq1$,
$\phi_1(t)\equiv -\sgn(\sigma_4(k_0)), \phi_3(t)\equiv -\sgn(\sigma_6(k_0))$
and $\phi_2(t)$ is a feedback control such that $d X_u(t)\leq - X_u(t)dt, t\geq 1$.
To be more specific, since $\sigma_2(k_0)\sigma_5(k_0)>0$, which leads to $\sgn(\sigma_2(k_0))\left[\sigma_2(k_0)+\sigma_5(k_0)X_u(t)\right]>|\sigma_2(k_0)|$, we can select
$$\phi_2(t)=
\begin{cases}
-1- \mu_2(k_0)S_u(t)-\frac12(\sigma_2(k_0)+2\sigma_5(k_0)X_u(t)) &\text{ if } \sigma_2(k_0)>0
\\
1+ \mu_2(k_0)S_u(t)& \text{ if }\sigma_2(k_0)<0.
\end{cases}
$$
Since $d X_u(t)\leq - X_u(t)dt, t\geq 1$, we have 
\begin{equation}\label{X_u}\lim_{t\to\infty} X_u(t)=0
	\end{equation}
On the other hand, with $\psi(t)\equiv k_0, t\geq1$,
$\phi_1(t)=\phi_2(t)=0, t\in[0,1)$ and $\phi_1(t)= -\sgn(\sigma_4(k_0)), \phi_3(t)= -\sgn(\sigma_6(k_0)), t\geq 1$, one can see that
$$
d[S_u(t)+P_u(t)]\leq [k_1 - k_2(S_u(t)+P_u(t))^2]dt, t\geq 1.
$$
This inequality implies
\begin{equation}\label{SP_u}
\limsup_{t\to\infty} (S_u(t)+P_u(t))\leq k_3:=\left(\frac{k_1}{k_2}\right)^{\frac12}.
\end{equation}
Combining \eqref{X_u} and \eqref{SP_u} and applying  \eqref{support}, we deduce that
$$
\sup_{t\geq0}\PP_{\by,k}\{|S(t)|+|P(t)|\leq k_3+1 \text{ and }X(t)<\eps\}>0  \text{ for any }\eps>0,
$$
which means the boundary $\R^2_+\times\{0\}$ is accessible.
The corollary is therefore proved.
\end{proof}
\textbf{Acknowledgments:} The research was supported by the research project QG.22.10 ``Asymptotic behaviour of mathematical models in ecology" of Vietnam National University, Hanoi for Nguyen Trong Hieu. A. Hening, D. Nguyen, and N. Nguyen acknowledge
support from the NSF through the grants DMS CAREER 2339000 and DMS-2407669.

\bibliographystyle{amsalpha}
\bibliography{LV}

@article{liu2015bioflocculant,
  title={Bioflocculant production from untreated corn stover using Cellulosimicrobium cellulans L804 isolate and its application to harvesting microalgae},
  author={Liu, Weijie and Zhao, Chenchu and Jiang, Jihong and Lu, Qian and Hao, Yan and Wang, Liang and Liu, Cong},
  journal={Biotechnology for biofuels},
  volume={8},
  number={1},
  pages={170},
  year={2015},
  publisher={Springer}
}

@article{agunbiade2017flocculating,
  title={Flocculating performance of a bioflocculant produced by Arthrobacter humicola in sewage waste water treatment},
  author={Agunbiade, Mayowa Oladele and Van Heerden, Esta and Pohl, Carolina H and Ashafa, Anofi Tom},
  journal={BMC biotechnology},
  volume={17},
  number={1},
  pages={51},
  year={2017},
  publisher={Springer}
}

@article{pu2018isolation,
  title={Isolation, identification, and characterization of an Aspergillus niger bioflocculant-producing strain using potato starch wastewater as nutrilite and its application},
  author={Pu, Shengyan and Ma, Hui and Deng, Daili and Xue, Shengyang and Zhu, Rongxin and Zhou, Yan and Xiong, Xingying},
  journal={PloS one},
  volume={13},
  number={1},
  pages={e0190236},
  year={2018},
  publisher={Public Library of Science San Francisco, CA USA}
}

@article{liu2014optimized,
  title={Optimized production of a novel bioflocculant M-C11 by Klebsiella sp. and its application in sludge dewatering},
  author={Liu, Jiewei and Ma, Junwei and Liu, Yanzhong and Yang, Ya and Yue, Dongbei and Wang, Hongtao},
  journal={Journal of Environmental Sciences},
  volume={26},
  number={10},
  pages={2076--2083},
  year={2014},
  publisher={Elsevier}
}

@article{zhao2013characterization,
  title={Characterization of a bioflocculant MBF-5 by Klebsiella pneumoniae and its application in Acanthamoeba cysts removal},
  author={Zhao, Haijuan and Liu, Hongtao and Zhou, Jiangang},
  journal={Bioresource technology},
  volume={137},
  pages={226--232},
  year={2013},
  publisher={Elsevier}
}

@article{more2014extracellular,
  title={Extracellular polymeric substances of bacteria and their potential environmental applications},
  author={More, TT and Yadav, Jay Shankar Singh and Yan, Song and Tyagi, Rajeshwar Dayal and Surampalli, Rao Y},
  journal={Journal of environmental management},
  volume={144},
  pages={1--25},
  year={2014},
  publisher={Elsevier}
}

@article{salehizadeh2001extracellular,
  title={Extracellular biopolymeric flocculants: recent trends and biotechnological importance},
  author={Salehizadeh, H and Shojaosadati, SA},
  journal={Biotechnology advances},
  volume={19},
  number={5},
  pages={371--385},
  year={2001},
  publisher={Elsevier}
}

@article{renault2009chitosan,
  title={Chitosan for coagulation/flocculation processes--an eco-friendly approach},
  author={Renault, Fran{\c{c}}ois and Sancey, Bertrand and Badot, P-M and Crini, Gr{\'e}gorio},
  journal={European Polymer Journal},
  volume={45},
  number={5},
  pages={1337--1348},
  year={2009},
  publisher={Elsevier}
}

@article{salehizadeh2018recent,
  title={Recent advances in polysaccharide bio-based flocculants},
  author={Salehizadeh, Hossein and Yan, Ning and Farnood, Ramin},
  journal={Biotechnology advances},
  volume={36},
  number={1},
  pages={92--119},
  year={2018},
  publisher={Elsevier}
}

@article{lee2009microbial,
  title={Microbial flocculation, a potentially low-cost harvesting technique for marine microalgae for the production of biodiesel},
  author={Lee, Andrew K and Lewis, David M and Ashman, Peter J},
  journal={Journal of Applied Phycology},
  volume={21},
  number={5},
  pages={559--567},
  year={2009},
  publisher={Springer}
}

@article{liu2021recent,
  title={Recent advances and perspectives in efforts to reduce the production and application cost of microbial flocculants},
  author={Liu, Cong and Sun, Di and Liu, Jiawen and Zhu, Jingrong and Liu, Weijie},
  journal={Bioresources and Bioprocessing},
  volume={8},
  number={1},
  pages={51},
  year={2021},
  publisher={Springer}
}

@book{ethier2009markov,
  title={Markov processes: characterization and convergence},
  author={Ethier, Stewart N and Kurtz, Thomas G},
  year={2009},
  publisher={John Wiley \& Sons}
}

@article{benaim2022stochastic,
	title={Stochastic persistence in degenerate stochastic Lotka-Volterra food chains},
	author={Bena{\"\i}m, Michel and Bourquin, Antoine and Nguyen, Dang H},
	journal={Discrete and Continuous Dynamical Systems-B},
	year={2022},
	publisher={American Institute of Mathematical Sciences}
}

@article{B23,
  title={Stochastic persistence},
  author={Bena{\"\i}m, M.},
   NOTE = {preprint},
  year={2023},
 url={https://arxiv.org/abs/1806.08450},
}

@article{HNS21,
  title={A classification of the dynamics of three-dimensional stochastic ecological systems},
  author={Hening, A. and Nguyen, D. H. and Schreiber, S. J.},
  journal={Annals of Applied Probability},
volume={32},
  number={2},
  year={2022}
}

@book {MAO,
    AUTHOR = {Mao, X.},
     TITLE = {Stochastic differential equations and their applications},
    SERIES = {Horwood Publishing Series in Mathematics \& Applications},
 PUBLISHER = {Horwood Publishing Limited, Chichester},
      YEAR = {1997},
   MRCLASS = {60H10 (34F05 34K50)},
  MRNUMBER = {1475218},
}

@article {ERSS13,
    AUTHOR = {Evans, S. N. and Ralph, P. L. and Schreiber, S.
              J. and Sen, A.},
     TITLE = {Stochastic population growth in spatially heterogeneous
              environments},
   JOURNAL = {J. Math. Biol.},
  FJOURNAL = {Journal of Mathematical Biology},
    VOLUME = {66},
      YEAR = {2013},
    NUMBER = {3},
     PAGES = {423--476},
   MRCLASS = {92D25 (60H10 60J28 60J60 60J70 60K37)},
  MRNUMBER = {3010201},
}

@article {EHS15,
    AUTHOR = {Evans, S. N. and Hening, A. and Schreiber,
              S. J.},
     TITLE = {Protected polymorphisms and evolutionary stability of
              patch-selection strategies in stochastic environments},
   JOURNAL = {J. Math. Biol.},
  FJOURNAL = {Journal of Mathematical Biology},
    VOLUME = {71},
      YEAR = {2015},
    NUMBER = {2},
     PAGES = {325--359},
      ISSN = {0303-6812},
   MRCLASS = {92D25 (60H10 60J70 92D40)},
  MRNUMBER = {3367678},
}

@book{LES03,
  title={Stochastic population dynamics in ecology and conservation},
  author={Lande, R. and Engen, S. and Saether, B.-E.},
  year={2003},
  publisher={Oxford University Press on Demand}
}

@article{C00,
  title={General theory of competitive coexistence in spatially-varying environments},
  author={Chesson, P.},
  journal={Theoretical Population Biology},
  volume={58},
  number={3},
  pages={211--237},
  year={2000},
  publisher={Elsevier}
}

@article{SLS09,
  title={Invasion dynamics in spatially heterogeneous environments},
  author={Schreiber, S. J. and Lloyd-Smith, J. O.},
  journal={The American Naturalist},
  volume={174},
  number={4},
  pages={490--505},
  year={2009},
  publisher={JSTOR}
}

@article{SBA11,
  title={Persistence in fluctuating environments},
  author={Schreiber, S. J. and Bena{\"\i}m, M. and Atchad{\'e}, K. A. S.},
    JOURNAL = {J. Math. Biol.},
  FJOURNAL = {Journal of Mathematical Biology},
    VOLUME = {62},
      YEAR = {2011},
    NUMBER = {5},
     PAGES = {655--683},
  MRNUMBER = {2786721},
}

@article {BEM07,
    AUTHOR = {Blath, J. and Etheridge, A. and Meredith, M.},
     TITLE = {Coexistence in locally regulated competing populations and
              survival of branching annihilating random walk},
   JOURNAL = {Ann. Appl. Probab.},
  FJOURNAL = {The Annals of Applied Probability},
    VOLUME = {17},
      YEAR = {2007},
    NUMBER = {5-6},
     PAGES = {1474--1507},
      ISSN = {1050-5164},
   MRCLASS = {60K35 (60J70 60J80 60J85 92D25)},
  MRNUMBER = {2358631},
}

@article{BS09,
  title={Persistence of structured populations in random environments},
  author={Bena{\"\i}m, M. and Schreiber, S. J.},
  journal={Theoretical Population Biology},
  volume={76},
  number={1},
  pages={19--34},
  year={2009},
  publisher={Elsevier}
}

@article{BHS08,
  title={Robust permanence and impermanence for stochastic replicator dynamics},
  author={Bena{\"\i}m, M. and Hofbauer, J. and Sandholm, W. H.},
   JOURNAL = {J. Biol. Dyn.},
  FJOURNAL = {Journal of Biological Dynamics},
    VOLUME = {2},
      YEAR = {2008},
    NUMBER = {2},
     PAGES = {180--195},
  MRNUMBER = {2427526},
}

@article{CM10,
  title={Competitive or weak cooperative stochastic {L}otka--{V}olterra systems conditioned on non-extinction},
  author={Cattiaux, P. and M{\'e}l{\'e}ard, S.},
 JOURNAL = {J. Math. Biol.},
  FJOURNAL = {Journal of Mathematical Biology},
    VOLUME = {60},
      YEAR = {2010},
    NUMBER = {6},
     PAGES = {797--829},
  MRNUMBER = {2606515},
}

@article{CCA09,
  title={Quasi-stationary distributions and diffusion models in population dynamics},
  author={Cattiaux, P. and Collet, P. and Lambert, A. and Mart{\'\i}nez, S. and M{\'e}l{\'e}ard, S. and Mart{\'\i}n, J. San},
  JOURNAL = {Ann. Probab.},
  FJOURNAL = {The Annals of Probability},
    VOLUME = {37},
      YEAR = {2009},
    NUMBER = {5},
     PAGES = {1926--1969},
  MRNUMBER = {2561437},
}

@article{T77,
  title={Random environments and stochastic calculus},
  author={Turelli, M.},
  journal={Theoretical Population Biology},
  volume={12},
  number={2},
  pages={140--178},
  year={1977},
  publisher={Elsevier}
}

@article{CE89,
  title={Invasibility and stochastic boundedness in monotonic competition models},
  author={Chesson, P. L. and Ellner, S.},
  journal={Journal of Mathematical Biology},
  volume={27},
  number={2},
  pages={117--138},
  year={1989},
  publisher={Springer}
}

@article{H81,
  title={A general cooperation theorem for hypercycles},
  author={Hofbauer, J.},
  journal={Monatshefte f{\"u}r Mathematik},
  volume={91},
  number={3},
  pages={233--240},
  year={1981},
  publisher={Springer}
}

@article{HJ89,
  title={Uniform persistence and repellors for maps},
  author={Hofbauer, J. and So, J. W-H},
  journal={Proceedings of the American Mathematical Society},
  volume={107},
  number={4},
  pages={1137--1142},
  year={1989}
}

@article{H84,
  title={A theorem on average {L}iapunov functions},
  author={Hutson, V.},
  journal={Monatshefte f{\"u}r Mathematik},
  volume={98},
  number={4},
  pages={267--275},
  year={1984},
  publisher={Springer}
}

@ARTICLE{HN16,
    AUTHOR = {A. Hening and D. H. Nguyen},
     TITLE = {Coexistence and extinction for stochastic {K}olmogorov systems},
   JOURNAL = {Ann. Appl. Probab.},
  FJOURNAL = {Annals of Applied Probability},
      YEAR = {2018},
    VOLUME = {28},
    NUMBER = {3},
     PAGES = {1893-1942},
}

@article{hening2021stationary,
  title={Stationary distributions of persistent ecological systems},
  author={Hening, Alexandru and Li, Yao},
  journal={Journal of Mathematical Biology},
  volume={82},
  number={7},
  pages={64},
  year={2021},
  publisher={Springer}
}

@article{ZY09,
  title={On competitive Lotka--Volterra model in random environments},
  author={Zhu, Chao and Yin, G},
  journal={Journal of Mathematical Analysis and Applications},
  volume={357},
  number={1},
  pages={154--170},
  year={2009},
  publisher={Elsevier}
}

@article{FS24,
  title={Stochastic extinction, an average lyapunov function approach},
  author={Foldes, Juraj and Stacy, Declan},
  journal={arXiv preprint arXiv:2407.19606},
  year={2024}
}

@article{HNC20,
  title={A general theory of coexistence and extinction for stochastic ecological communities},
  author={Hening, A and Nguyen, D. and Chesson, P},
journal={Journal of Mathematical Biology},
  volume={82},
  number={6},
  pages={1--76},
  year={2021},
  publisher={Springer}
}

@article{nguyen2017certain,
  title={Certain properties related to well posedness of switching diffusions},
  author={Nguyen, Dang Hai and Yin, George and Zhu, Chao},
  journal={Stochastic Processes and their Applications},
  volume={127},
  number={10},
  pages={3135--3158},
  year={2017},
  publisher={Elsevier}
}

@books{YZ10,
  title={Hybrid Switching Diffusions: Properties and Applications },
  author={Yin, G and Zhou, C,.},
  year={2010},
  publisher={Springer}
}

@article{zhang2020stationary,
  title={The stationary distribution of a microorganism flocculation model with stochastic perturbation},
  author={Zhang, Haisu and Zhang, Tongqian},
  journal={Applied Mathematics Letters},
  volume={103},
  pages={106217},
  year={2020},
  publisher={Elsevier}
}

@article{zhang2021asymptotic,
  title={Asymptotic behavior of a stochastic microorganism flocculation model with time delay},
  author={Zhang, Haisu and Zhang, Tongqian},
  journal={Applied Mathematics Letters},
  volume={121},
  pages={107384},
  year={2021},
  publisher={Elsevier}
}

@article{nguyen2020general,
  title={General nonlinear stochastic systems motivated by chemostat models: Complete characterization of long-time behavior, optimal controls, and applications to wastewater treatment},
  author={Nguyen, Dang H and Nguyen, Nhu N and Yin, George},
  journal={Stochastic Processes and their Applications},
  volume={130},
  number={8},
  pages={4608--4642},
  year={2020},
  publisher={Elsevier}
}
\end{document}